\numberwithin{equation}{subsection}
\newtheorem{theorem}{Theorem}[section]
\newtheorem{lemma}[theorem]{Lemma}
\newtheorem{corollary}[theorem]{Corollary}
\theoremstyle{plain}
\newtheorem{definition}[theorem]{Definition}
\newtheorem{proposition-definition}[theorem]{Proposition-Definition}
\newtheorem{proposition}[theorem]{Proposition}
\newtheorem{thm}{Theorem}[subsection]
\theoremstyle{remark}
\newtheorem{remark}[theorem]{Remark}
\newcommand{\Lie}{\mathrm{Lie}}
\newcommand{\Z}{\mathbb Z}
\newcommand{\F}{\mathbb F}
\begin{document}

\title{Presentation of the Iwasawa algebra of the pro-$p$ Iwahori subgroup of $GL_n(\mathbb{Z}_p)$}
\author[]{Jishnu Ray}

\affil[]{Département de Mathématiques, Bâtiment 307,\\
Faculté des Sciences d'Orsay, Université Paris-Sud XI,
F-91405 Orsay Cedex\\ 
 
 jishnuray1992@gmail.com; jishnu.ray@u-psud.fr}
\date{}

\maketitle

\begin{abstract}
Iwasawa algebras of compact $p$-adic Lie groups are completed group algebras with applications in number theory in studying class numbers of towers of number fields and  representation theory of $p$-adic Lie groups. We previously determined an explicit presentation of the Iwasawa algebra for the first principal congruence kernel of Chevalley groups over $\mathbb{Z}_p$. In this paper, 
for prime $p>n+1$, we extend our result to determine the explicit presentation,  in the form of generators and relations, of the Iwasawa algebra of the pro-$p$ Iwahori subgroup of $GL_n(\mathbb{Z}_p)$.
\end{abstract}

\section{Introduction}\label{intro:prop}

Let  $G$ be the pro-$p$ Iwahori subgroup of $SL_n(\mathbb{Z}_p)$, that is, the group of matrices in $SL_n(\mathbb{Z}_p)$ which are upper unipotent modulo the ideal $p\mathbb{Z}_p$.

The Iwasawa algebra of $G$ is a non-commutative completed group algebra defined by 
$$\Lambda (G):=\varprojlim _{N\in \mathcal{N}(G)}(G/N),$$ where $\mathcal{N}(G)$ is the set of open normal subgroups in $G$. This algebra has many applications in number theory and $p$-adic representation theory (\cite{Emertonbook}). It is used by Iwasawa \cite{Iwa}  to study the growth of class numbers in towers of number fields. Schneider and Teitelbaum use the Iwasawa algebra to study the category of $\mathbb{Q}_p$-Banach representations of compact $p$-adic Lie groups \cite{SchneiderBan}. In \cite{Ray}, we found an explicit presentation of the Iwasawa algebra for the first principal congruence kernel of a semi-simple, simply connected Chevalley group over $\mathbb{Z}_p$.  In this section, our goal is to extend the method to give an explicit presentation of the Iwasawa algebra for the pro-$p$ Iwahori subgroup of $SL(n,\mathbb{Z}_p)$ generalizing the work of Clozel for $n=2$ \cite{Clozel2}. Our main result (see Theorem \ref{eq:theoremgln}) is the following.\\

\textbf{\textit{Theorem.}} \textit{For} $p>n+1$, \textit{the Iwasawa algebra of the pro-}$p$ \textit{Iwahori subgroup of} $SL(n,\mathbb{Z}_p)$ \textit{is naturally isomorphic as a topological ring to} $\mathcal{A}/\mathcal{R}$. \\

Here $\mathcal{A}$ is a non-commutative power series ring over $\Z_p$ in  the variables $V_{\alpha},W_{\delta}, U_{\beta}$ where $\alpha,\delta,\beta$ varies over the positive, simple and negative roots respectively. The ordering among the variables is given by the ordering on the roots as in Theorem \ref{eq:orderedbasisiwahori}. The algebra   $\mathcal{R}$ is a closed two-sided ideal in $\mathcal{A}$ generated by a set of explicit relations between these variables  (\ref{eq:firstIwahori}-\ref{eq:fifteenIwahori}). 

We first deduce the explicit presentation for $\Omega_G:=\Lambda(G) \otimes_{\mathbb{Z}_p}\mathbb{F}_p$, the Iwasawa algebra modulo $p$ (theorem \ref{eq:secondimptheoremIwahori}) and then we lift the coefficients to $\mathbb{Z}_p$.
We also extend our proof to the case when $G$ is the pro-$p$ Iwahori of $GL(n,\mathbb{Z}_p)$ (corollary \ref{eq:theoremgln}).\\

 The explicit presentation can be used to define a "formal base change map" \cite{Clozel2} of Iwasawa algebras $$\Lambda_L \rightarrow \Lambda_{\mathbb{Q}_p}$$ 
where $\Lambda_L$ is the Iwasawa algebra over a finite unramified extension $L$ of $\mathbb{Q}_{p}$. Such a formal base change map is given by power series which only converge for the globally analytic distributions (continuous dual of the rigid-analytic functions) on the pro-$p$ Iwahori seen as a rigid-analytic space (\textit{loc.cit}). This leads us also to the study of the globally analytic vectors of $p$-adic representations which will be our object in a forthcoming paper. 

Apart from the above implications of our explicit presentation of the Iwasawa algebra, Dong Han and Feng Wei, quoted an earlier version of this paper and noted that our results may provide possible ways to answer the open question on the existence of non-trivial normal elements in $\Omega_G:=\Lambda(G) \otimes_{\Z_p} \F_p$ (cf. introduction and section 5 of \cite{HW1}). An element $r \in \Omega_G$ is normal if $r\Omega_G=\Omega_Gr$. The question on the normal elements was originally posed in \cite{WB}, later reformulated in \cite{HW1} having dealt with the case for $SL(2,\mathbb{Z}_p)$ and $SL(3,\mathbb{Z}_p)$. As noted in \cite{HW1}, the normal elements help in constructing reflexive ideals in the Iwasawa algebra. The main question of Han and Wei is to find a mechanism for constructing ideals of completed group algebras without using central elements or closed normal subgroups which provide natural ways to construct ideals in the Iwasawa algebra (\textit{loc.cit}).\\

\textbf{Roadmap}. Section \ref{sub:notationiwahori} is devoted to preliminary notations. The theorem on ordered basis of the pro-$p$ Iwahori group $G$ with respect to the $p$-valuation $\omega$ (\ref{defiw}) on $G$ is given in section $\ref{sub:ordiwa}$ (theorem \ref{eq:orderedbasisiwahori}).  In section $\ref{sub:Iwasawa}$ we recall the notion of the Iwasawa algebra $\Lambda(G)$ of $G$, the algebra of $p$-adic measures on $G$. By a theorem of Lazard, this algebra is isomorphic as a $\mathbb{Z}_p$-module to a ring of power series in several variables. The relations between the variables inside $\Lambda(G)$ are given in lemma \ref{relations} in section $\ref{sub:relationsinIwahori}$. The computations for the proof of lemma \ref{relations} are included in section \ref{appendixCrelations}. Sections $\ref{sub:Iwasawa}$ and $\ref{sub:conti}$ deal with the construction of a continuous surjection $\varphi:\mathcal{A} \rightarrow \Lambda(G)$. Let $\overline{\mathcal{B}}:=\overline{\mathcal{A}}/\overline{\mathcal{R}}$ be the reduction modulo $p$ of $\mathcal{A}/\mathcal{R}$. The algebra  $\overline{\mathcal{B}}$ has a natural grading discussed after Theorem $\ref{eq:bird}$. For integer $m \geq 0$, we provide an upper bound on the dimension of the $m$-th graded piece gr$^m\overline{\mathcal{B}}$ in section
\ref{sub:longlemmaiwahori} (see lemma \ref{eq:longlemma}). The computations for the proof of lemma \ref{eq:longlemma} is included in section \ref{appendixDlong}. Finally, the proof of the explicit presentation of $\Omega_G$ and $\Lambda(G)$ is  in section $\ref{sub:maintheoremsiwahori}$.\\

\section{Lazard's ordered Basis for the pro-$p$ Iwahori subgroup $G$}\label{ordIwa}
Recall that $G$ is the pro-$p$ Iwahori subgroup of $SL_n(\mathbb{Z}_p)$, i.e. $G$ is the group of matrices in $SL_n(\mathbb{Z}_p)$ which are upper unipotent modulo the maximal ideal $p\mathbb{Z}_p$ of $\mathbb{Z}_p$. The goal of this section is to  find an ordered basis in the sense of Lazard \cite{Lazard} for the pro-$p$ Iwahori group $G$. This will later be used in deducing an explicit presentation of the Iwasawa algebra of $G$ (theorem \ref{eq:maintheoremIwahori}).

In this section, we first define  a function $\omega_{\frac{1}{n}}$ on $G$ which is a $p$-valuation $(\ref{eq:valuationiwahori})$ in the sense of Lazard (cf. chapter III, $2.1.2$ of \cite{Lazard}). Then, in section $\ref{sub:ordiwa}$,  we give an ordered basis for the $p$-valuation (Theorem $\ref{eq:orderedbasisiwahori}$). This means that we find an ordered set of elements $g_1,...,g_d \in G$ such that 
\begin{flushleft}
	1.  The map $\mathbb{Z}_p^d \rightarrow G$ sending 
	$(z_1,...,z_d) \mapsto g_1^{z_1} \cdots g_d^{z_d}$ is a bijection and\\
	2. $\omega_{\frac{1}{n}}(g_1^{z_1} \cdots g_d^{z_d})=\min_{1\leq i\leq d}(\omega_{\frac{1}{n}}(g_i)+val_p(z_i))$.
\end{flushleft}

\subsection{$p$-valuation on $G$}\label{sub:notationiwahori}
Let $p$ be a prime number. Fix a pinning \cite[XXIII 1]{SGA3.3r} of the split reductive group $SL_n$ over $\mathbb{Z}_p$
\[
\left(T_S,M,\Phi,\Pi,(X_{\varsigma})_{\varsigma\in\Pi}\right)
\]
where $T_S$ is a split maximal torus in $SL_n$, $M=X^{\ast}(T_S)$ is its group of characters,
\[
\mathfrak{g}=\mathfrak{g}_{0}\oplus\oplus_{\varsigma \in \Phi}\mathfrak{g}_{\varsigma}
\]
is the weight decomposition for the adjoint action of $T_S$
on $\mathfrak{g}=\Lie(SL_n)$, $\Pi\subset \Phi$ is a basis
of the root system $\Phi \subset M$ and for each $\varsigma \in \Pi$,
$X_{\varsigma}$ is a $\mathbb{Z}_{p}$-basis of $\mathfrak{g}_{\varsigma}$. \\

We denote the positive and negative roots by $\Phi^+$ and $\Phi^-$ respectively with respect to the standard Borel subgroup of $SL_n(\mathbb{Z}_p)$. The height function on the roots $h(\varsigma) \in \mathbb{Z}$ of $\varsigma \in \Phi$ is the sum of the coefficients of $\varsigma$ in the basis $\Pi$ of $\Phi$. We expand
$(X_{\varsigma})_{\varsigma\in\Pi}$ to a Chevalley system $(X_{\varsigma})_{\varsigma\in \Phi}$
of $SL_n$ \cite[XXIII 6.2]{SGA3.3r}. For $\varsigma \in \Phi$, $t \in \mathbb{Z}_p$, $\lambda \in \mathbb{Z}_p^{\ast}$, we denote $x_{\varsigma}(t)=\exp(tX_{\varsigma})$, $h_{\varsigma}(\lambda)=w_{\varsigma}(\lambda)w_{\varsigma}(1)^{-1}$, where $w_{\varsigma}(\lambda)=x_{\varsigma}(\lambda)x_{-\varsigma}(-\lambda^{-1})x_{\varsigma}(\lambda)$.\\

Henceforth, we assume 
\begin{align}
\label{eq:conditioniwahoi}
p>n+1.
\end{align}
Thus, $G$ is $p$-saturated (cf. Lazard, \cite{Lazard}, $3.2.7.5$, chap. $3$ ).

The following construction can be found in \cite[p. 172]{Lie}.
\begin{definition}\label{defiw}
	For each real $a$ with $\frac{1}{p-1}<a<\frac{p-2}{(p-1)(n-1)}$, $g \in G, g =(a_{ij})$,  a $p$-valuation $\omega_{a}$ on $G$ is given by
	\begin{align}\label{eq:valuationiwahori}
	\omega_{a}(g):&=\min\Big(\min_{1 \leq i \neq j \leq n}((j-i)a+val_p(a_{ij})),\min_{1 \leq i \leq n}val_p(a_{ii}-1)\Big),\\
	&=\min \Big({(j-i)a+val_p(a_{ij})}_{i \neq j},val_p(a_{ii}-1)\Big).
	\end{align}
\end{definition}

Setting $a=\frac{1}{n}$, we write simply $\omega$ for $\omega_a$.
The function $\omega$ makes $G$ a $p$-valuable group in the sense of \cite[III.2.1.2]{Lazard}. In fact, for $p>n+1$, $(G,\omega)$ is $p$-saturated \cite[III.3.2.7.5]{Lazard}.\\

The pro-$p$ Iwahori group $G$ has a triangular decomposition 
\begin{center}
	$G=N^-TN^+$
\end{center}
(cf p. $317$, section $1.6$ of \cite{Garrett}, also \cite{Bruhat}), where $N^-$ (resp. $N^+$) is the subgroup of lower (resp. upper) unipotent matrices of $G$ and $T$ is the subgroup of the diagonal matrices of $G$. Let $\alpha,\beta, \delta $ be the roots in $\Phi^+,\Phi^-,\Pi$ respectively. From $\ref{eq:valuationiwahori}$ we deduce that
\begin{align}
\omega(x_{\beta}(p))&=\frac{(j-i)}{n} + 1; \  \beta\in \Phi^-,\beta=(i,j),i>j,\label{2.2.1}\\
\omega(h_{\delta}(1+p))&=1; \ \delta \in \Pi,\label{2.2.2}\\
\omega(x_{\alpha}(1))&=\frac{(j-i)}{n}; \ \alpha \in \Phi^+,\alpha=(i,j),i<j.\label{2.2.3}
\end{align}
In the next section we are going to use this $p$-valuation $\omega$ on $G$ in order to find an ordered basis of $G$ in the sense of Lazard (cf. Lazard, \cite[III, 2.2]{Lazard}).
\subsection{Ordered basis}\label{sub:ordiwa}

In this section we find an ordered basis for the $p$-valuation on $G$ (theorem \ref{eq:orderedbasisiwahori}). But before proving theorem \ref{eq:orderedbasisiwahori} we need  a preparatory lemma \ref{lemma1} and a proposition \ref{matrixXYZexplicit}. Let $E_{i,j}$ be the standard elementary matrix at $(i,j)$-th place.

	\begin{lemma}\label{lemma1}
		Any element $g \in G$ has a unique expression of the form
		\begin{equation}
		g=\prod_{{\beta} \in \Phi^-}x_{\beta}(u_{\beta})\prod_{{\delta} \in \Pi}h_{\delta}(1+v_{\delta})\prod_{{\alpha} \in \Phi^+}x_{\alpha}(w_{\alpha}),
		\end{equation}
		where $u_{\beta},v_{\delta} \in p\mathbb{Z}_p,w_{\alpha} \in \mathbb{Z}_p$. The order of the products is taken as follows (compare also theorem \ref{eq:orderedbasisiwahori}):
		\begin{flushleft}
			(i) first take the lower unipotent matrices in the order given by the (increasing) height function on the roots,\\
			(ii) then take the diagonal elements $h_{\delta}(1+p)$ for $\delta \in \Pi$ starting from the top left extreme to the low right extreme and, 
			
			(iii) finally, take the upper unipotent matrices in the following lexicographic order:\\
			The matrix $(1+E_{i,j})$ comes before $(1+E_{k,l})$ if and only if $i\geq k$ and $i=k$ $\implies$ $j>l$. 
		\end{flushleft}
	\end{lemma}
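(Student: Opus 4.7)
The plan is to combine an LDU-type triangular decomposition of $g$ with a unique parametrization of each of the three pieces $N^-$, $T$, $N^+$.

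First, I would establish that any $g \in G$ factors uniquely as $g = LDU$ with $L \in N^-$ (lower unipotent, off-diagonal entries in $p\Z_p$), $D$ diagonal with entries in $1+p\Z_p$ and determinant $1$, and $U \in N^+$ (upper unipotent, off-diagonal entries in $\Z_p$). Because $g$ is upper unipotent modulo $p$, every leading principal $k\times k$ minor of $g$ reduces to $1 \bmod p$ and therefore lies in $1+p\Z_p$, hence is a unit in $\Z_p$. The standard Gaussian-elimination recursion then produces the factors $L, D, U$ over $\Z_p$ with the required integrality, and uniqueness is the usual observation $N^- \cap TN^+ = \{I\}$.

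Next I would show each factor admits the prescribed product expression uniquely. For the diagonal $D$, a direct computation shows
\[
\prod_{i=1}^{n-1} h_{\alpha_i}(1+v_i) = \operatorname{diag}\bigl((1+v_1),\ (1+v_1)^{-1}(1+v_2),\ \ldots,\ (1+v_{n-1})^{-1}\bigr),
\]
so the telescoping relations $1+v_k = d_1 d_2 \cdots d_k$ determine the $v_\delta \in p\Z_p$ uniquely, and the constraint $\prod d_i = 1$ is automatic from $g \in SL_n$. For $U \in N^+$, the lexicographic order in (iii) is engineered so that no cross-terms arise: the expansion rule $(I+aE_{ij})(I+bE_{kl}) = I + aE_{ij} + bE_{kl} + ab\,\delta_{jk}\,E_{il}$ shows a nontrivial cross-term requires a chain $E_{ij}E_{jl}$ with $i < j < l$ in product order, but the stated order places $(j,l)$ before $(i,j)$ since $j > i$. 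Hence $\prod x_\alpha(w_\alpha) = I + \sum_\alpha w_\alpha E_\alpha$ exactly, and each $w_\alpha \in \Z_p$ is read off the corresponding upper-triangular entry.

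The most delicate part is the lower-unipotent factor $L$, where cross-terms really do occur. Here the key observation is that any chain $(i,a_1),(a_1,a_2),\ldots,(a_{r-1},j)$ contributing to the $(i,j)$-entry of $\prod x_\beta(u_\beta)$ has $i > a_1 > \cdots > a_{r-1} > j$, so every intermediate constituent $(a_{s-1},a_s)$ satisfies $a_{s-1} - a_s < i-j$, i.e.\ is strictly closer to the diagonal than $(i,j)$. Consequently the polynomial system takes the triangular form
\[
L_{ij} = u_{(i,j)} + P_{ij}\bigl(\{u_{(p,q)} : p-q < i-j\}\bigr),
\]
which can be inverted by induction on $i-j$, starting from the subdiagonal where $L_{p,p-1} = u_{(p,p-1)}$; each $u_\beta$ lands in $p\Z_p$ because the off-diagonal entries of $L$ do. The main obstacle is verifying this triangularity carefully: one must check that no matter which secondary tie-breaking among roots of equal height is used to refine the height-increasing order, the resulting $P_{ij}$ involves only $u$'s indexed by strictly shallower roots. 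Combining the three unique parametrizations with the $LDU$ decomposition then yields the claimed unique factorization of $g$.
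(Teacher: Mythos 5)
Your proposal is correct, and its overall skeleton is the same as the paper's: a triangular (Iwahori/LDU) decomposition $g=LDU$ followed by a unique parametrization of each factor in the prescribed order, with the torus handled by the same telescoping computation and $N^+$ by the same no-cross-term observation (your remark that a chain $E_{ij}E_{jl}$ cannot occur in the order of (iii) is exactly the paper's disjointness argument). Two points differ. First, you prove the factorization $G=N^-TN^+$ directly from the fact that the leading principal minors of $g$ are units, rather than citing Bruhat--Tits/Garrett as the paper does; this is fine and self-contained, though you should say explicitly why $L\equiv I$ and $D\equiv I \pmod p$ (reduce mod $p$ and use uniqueness of the LDU factorization of the upper unipotent matrix $\bar g$ over $\mathbb{F}_p$, or the minor formulas). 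Second, and more substantially, your treatment of the lower-unipotent factor is genuinely different: the paper asserts (equation \ref{eq:matrixX}) that the height-ordered product expands with no cross-terms, i.e.\ equals $I+\sum p x_{i,j}E_{i,j}$, whereas you concede that cross-terms occur and invert the resulting triangular polynomial system by induction on $i-j$. Your caution is warranted: for $n\ge 4$ the height order does produce cross-terms --- e.g.\ in $SL_4$ the order $(4,1),(3,1),(4,2),(2,1),(3,2),(4,3)$ gives $E_{4,2}E_{2,1}=E_{4,1}$, so the $(4,1)$ entry of the ordered product is $px_{4,1}+p^2x_{4,2}x_{2,1}$; the column/row disjointness argument the paper uses for $N^+$ (and, in Remark \ref{rem:ordered}, for the lexicographic order on $N^-$) does not apply to the height order. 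Your induction on the distance to the diagonal is exactly what justifies existence and uniqueness of the $u_\beta\in p\mathbb{Z}_p$ for the height order (and shows independence of the tie-breaking within a height), at the cost of losing the exact closed formula for $X$; if one wants that formula as used later in Proposition \ref{matrixXYZexplicit}, one should either pass to the order of Remark \ref{rem:ordered} or observe that the correction terms have strictly larger $p$-adic valuation.
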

		
		That is, for the upper unipotent matrices we start with the low and right extreme and then fill the lines from the right, going up.  \\
		
\begin{proof}
	\vspace{.2cm}
$\bullet$ 
Any element of $T$ can be uniquely written as $\prod_{\delta \in \Pi}h_{\delta}(1+v_{\delta})$ with $v_{\delta} \in p\mathbb{Z}_p$. (cf. last paragraph of the proof of Theorem $3.2$ of \cite{Ray}.) If $\delta=(i,i+1)$, then $h_{\delta}(1+v_{\delta})$ is the diagonal matrix $(1+p)^{v_{\delta}}E_{i,i}+(1+p)^{-v_{\delta}}E_{i+1,i+1}+\sum_{j \neq i}E_{j,j}$.  So, each element $Z$  of $T$ can be uniquely written as 
	\begin{align}
	\label{eq:matrixZ}
	Z=\sum_{k=1}^n(1+p)^{x_{k,k}-x_{k-1,k-1}}E_{k,k}
	\end{align}
	for unique $x_{k,k} \in \mathbb{Z}_p$ and $x_{0,0}=x_{n,n}=0$.\\
	\hrule
		\vspace{.2cm}
	
$\bullet$	For  $x_{i,j} \in \mathbb{Z}_p$, we have
	\begin{center}
		$(1+px_{n,1}E_{n,1})(1+px_{n-1,1}E_{n-1,1})(1+px_{n,2}E_{n,2})\cdots (1+px_{n,n-1})E_{n,n-1}$\\
		$=1+px_{n,1}E_{n,1}+px_{n-1,1}E_{n-1,1}+px_{n,2}E_{n,2}+\cdots +px_{n,n-1}E_{n,n-1}$,
	\end{center}
	where the order of the product is taken according to the (increasing) height function on the roots. 
	This directly implies that every element $X$ of $N^-$ can be  written as 
	\begin{align}\label{eq:matrixX}
	X=(1+pE_{n,1})^{x_{n,1}}(1+pE_{n-1,1})^{x_{n-1,1}}(1+pE_{n,2})^{x_{n,2}}\cdots (1+pE_{n,n-1})^{x_{n,n-1}}=I+\sum\limits_{\substack{i\in [2,n] \\ j\in [1,i-1]}}px_{i,j}E_{i,j},
	\end{align} 
	with unique $x_{i,j} \in \mathbb{Z}_p$. \\
	
	Thus, each element of $N^-$ can be uniquely written as a product $\prod_{\beta \in \Phi^-}x_{\beta}(p)$ with the ordering given by the height function. \\
\hrule
\vspace{.2cm}

$\bullet$	
	Also, because of our dual lexicographic order on $N^+$, we have
	\begin{flushleft}
		$(1+x_{n-1,n}E_{n-1,n})(1+x_{n-2,n}E_{n-2,n})(1+x_{n-2,n-1}E_{n-2,n-1})\cdots (1+x_{1,2}E_{1,2})$\\
		$=1+x_{n-1,n}E_{n-1,n}+x_{n-2,n}E_{n-2,n}+x_{n-2,n-1}E_{n-2,n-1}+\cdots +x_{1,2}E_{1,2}$.
	\end{flushleft}
	Indeed, for any $b\in \mathbb{N}$, if we take the product of the first $b$ terms in the L.H.S of the above equation, the set of column entries which appears in the subscript of the elementary matrices of the product, is disjoint from the row entry which appears in the subscript of the elementary matrix occurring in the $(b+1)^{th}$ term of the product in the L.H.S. We also use $E_{i,j}E_{k,l}=E_{i,l}$ for $j=k$ and $E_{i,j}E_{k,l}=0$ if $j \neq k$.\\
	
	So, each element $Y$ of $N^+$ can be uniquely written as 
	\begin{align}\label{eq:matrixY}
	Y=(1+E_{n-1,n})^{x_{n-1,n}}(1+E_{n-2,n})^{x_{n-2,n}}(1+E_{n-2,n-1})^{x_{n-2,n-1}}... (1+E_{1,2})^{x_{1,2}}=I+\sum\limits_{\substack{i\in [1,n-1] \\ j\in [i+1,n]}}x_{i,j}E_{i,,j},
	\end{align}
	with $x_{i,j} \in \mathbb{Z}_p$. \\
	\hrule
		\vspace{.2cm}
		
	Therefore, from the triangular decomposition $G=N^-TN^+$, we get that each element $g \in G$ has a unique expression of the form
	\begin{equation}
	g=\prod_{{\beta} \in \Phi^-}x_{\beta}(u_{\beta})\prod_{{\delta} \in \Pi}h_{\delta}(1+v_{\delta})\prod_{{\alpha} \in \Phi^+}x_{\alpha}(w_{\alpha}),
	\end{equation}
	where $u_{\beta},v_{\delta} \in p\mathbb{Z}_p,w_{\alpha} \in \mathbb{Z}_p$. The order of the products is taken according to Theorem $\ref{eq:orderedbasisiwahori}$.  This proves lemma \ref{lemma1}.
	\end{proof}

	In order to find an ordered basis for the $p$-valuation on $G$, we need to compute the  $p$-valuation of the product $XZY$ (the matrices $X,Y,Z$ are defined in the proof of lemma \ref{lemma1}). 
	\begin{proposition}\label{matrixXYZexplicit}
		Let $a_{i,j}$ be the $(i,j)^{th}$ entry of the matrix $XZY=(a_{i,j})$. Then,
		\begin{align}
		a_{1,j}&=(1+p)^{x_{1,1}}x_{1,j}, \ j \in [2,n] \label{eq:matrixa1j}\\
		a_{i,1}&=px_{i,1}(1+p)^{x_{1,1}}, \ i \in [2,n]\label{eq:matrixai1}\\
		a_{1,1}&=(1+p)^{x_{1,1}} \label{eq:matrixa11}\\
		a_{i,j}&=(\sum_{k=1}^mpx_{i,k}x_{k,j}(1+p)^{x_{k,k}-x_{k-1,k-1}})+px_{i,j}(1+p)^{x_{j,j}-x_{j-1,j-1}}, \  2 \leq j < i \leq n \label{eq:matrixaijilargej}\\
		a_{i,j}&=(\sum_{k=1}^mpx_{i,k}x_{k,j}(1+p)^{x_{k,k}-x_{k-1,k-1}})+x_{i,j}(1+p)^{x_{i,i}-x_{i-1,i-1}}, \  2 \leq i < j \leq n \label{eq:matrixaijismallj}\\
		a_{i,j}&=(\sum_{k=1}^m px_{i,k}x_{k,j}(1+p)^{x_{k,k}-x_{k-1,k-1}})+(1+p)^{x_{i,i}-x_{i-1,i-1}}, \ 2 \leq i=j \leq n. \label{eq:matrixaijiequalj}
		\end{align}
	\end{proposition}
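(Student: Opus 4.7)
The plan is to compute $XZY$ by direct matrix multiplication, proceeding in two stages: first form the product $M := ZY$, and then left-multiply by $X$. The reason this is efficient is that $Z$ is diagonal, so $M$ has an essentially trivial description, and $X$ differs from the identity only in the strictly lower-triangular part, so left-multiplication by $X$ adds only a few correction terms to each entry of $M$.

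First I would unravel the explicit shapes from Lemma \ref{lemma1}. From \eqref{eq:matrixZ}, $Z = \sum_{k=1}^n (1+p)^{x_{k,k}-x_{k-1,k-1}} E_{k,k}$ with the convention $x_{0,0}=x_{n,n}=0$; from \eqref{eq:matrixY}, $Y = I + \sum_{i<j} x_{i,j}E_{i,j}$; from \eqref{eq:matrixX}, $X = I + \sum_{i>j} p\,x_{i,j}E_{i,j}$. Multiplying $Z \cdot Y$ and using $E_{k,k}E_{i,j}=\delta_{ki}E_{k,j}$, one finds immediately
\[
M_{k,j} \;=\; \begin{cases} (1+p)^{x_{k,k}-x_{k-1,k-1}} & \text{if } k=j,\\ (1+p)^{x_{k,k}-x_{k-1,k-1}}\,x_{k,j} & \text{if } k<j,\\ 0 & \text{if } k>j.\end{cases}
\]

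Next I would compute $(XZY)_{i,j} = \sum_{k} X_{i,k}M_{k,j} = M_{i,j} + \sum_{k<i} p\,x_{i,k}M_{k,j}$, splitting into the cases that appear in the statement. For $i=1$, only the $k=i$ term survives, giving \eqref{eq:matrixa11} and \eqref{eq:matrixa1j} instantly. For $j=1$ and $i\ge 2$, the sum collapses because $M_{k,1}=0$ for $k>1$, leaving only the $k=1$ contribution $p\,x_{i,1}(1+p)^{x_{1,1}}$, which is \eqref{eq:matrixai1}. For the three remaining cases ($2\le j<i$, $2\le i<j$, and $2\le i=j$), I would plug the three cases of $M_{k,j}$ into the sum $M_{i,j} + \sum_{k<i}p\,x_{i,k}M_{k,j}$; the upper limit of the resulting nonzero range of $k$ is $m := \min(i,j)-1$, which is what is abbreviated as $m$ in the formulas. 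The isolated summand $p\,x_{i,j}(1+p)^{x_{j,j}-x_{j-1,j-1}}$ in \eqref{eq:matrixaijilargej} comes from $M_{i,j}=0$ being absent but a term $k=j<i$ in the sum contributing $M_{j,j}$; the analogous $(1+p)^{x_{i,i}-x_{i-1,i-1}}$ summands in \eqref{eq:matrixaijismallj} and \eqref{eq:matrixaijiequalj} come from the $M_{i,j}$ term itself (for $i\le j$).

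There is no conceptual difficulty here; the only potential obstacle is clerical, namely keeping track of which values of $k$ produce the diagonal contribution of $M$ (at $k=i$, relevant when $i\le j$) versus the off-diagonal contribution (at $k=j$, relevant when $j<i$) versus the generic sum over $k<\min(i,j)$. I would organize the case split by whether $i<j$, $i=j$, or $i>j$, and within the last case separate $j=1$ to recover \eqref{eq:matrixai1}. This yields all six identities of the proposition.
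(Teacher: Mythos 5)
Your proposal is correct and is essentially the paper's own argument: a direct entry-by-entry computation of the triple product using the explicit forms of $X$, $Z$, $Y$ from Lemma \ref{lemma1}, with the same case split on $i<j$, $i=j$, $i>j$ and the same bookkeeping $m=\min\{i-1,j-1\}$. The only (immaterial) difference is the grouping: you compute $X(ZY)$, scaling the rows of $Y$ and then adding the lower-triangular corrections from $X$, whereas the paper computes $(XZ)Y$, writing out the $i$-th row of $XZ$ and pairing it with the $j$-th column of $Y$.
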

	\begin{proof}
	Multiplying the matrix $X$ (see equation $\ref{eq:matrixX}$) with $Z$ (see equation $\ref{eq:matrixZ}$) we get
	\begin{align}
	\label{eq:matrixXZ}
	XZ=\sum_{k=1}^n(1+p)^{x_{k,k}-x_{k-1,k-1}}E_{k,k}+\sum\limits_{\substack{i=2,...,n \\ j=1,...,i-1}}px_{i,j}(1+p)^{x_{j,j}-x_{j-1,j-1}}E_{i,j}.
	\end{align}
	So, if we write the $(i,j)^{th}$ coordinate of the matrix $XZ$ by $(XZ)_{i,j}$, then we deduce (from $\ref{eq:matrixXZ}$) that 
	\begin{equation}
	\label{eq:matrixXZijentry}
	(XZ)_{i,1}=px_{i,1}(1+p)^{x_{1,1}},\ i \in [2,n],
	\end{equation}
	and
	\begin{equation}
	\label{eq:matrixXZ11entry}
	(XZ)_{1,1}=(1+p)^{x_{1,1}}, (XZ)_{1,j}=0,  \  j \in [2,n].
	\end{equation}
	Also, for $i \geq 2$, it is clear from equation $\ref{eq:matrixXZ}$ that the $i^{th}$ row of the matrix $XZ$ is 
	\begin{align}\label{eq:rowith}
	[px_{i,1}(1+p)^{x_{1,1}},...,\underbrace{px_{i,i-1}(1+p)^{x_{i-1,i-1}-x_{i-2,i-2}}}_{(XZ)_{i,i-1}},\underbrace{(1+p)^{x_{i,i}-x_{i-1,i-1}}}_{(XZ)_{i,i}},0,0,...,0]
	\end{align}
	In order to compute the $(i,j)^{th}$ entry of the matrix $XZY=(a_{i,j})$ we have to multiply the $i^{th}$ row of the matrix $XZ$ given above by the $j^{th}$ column of the matrix $Y$ (the matrix $Y$ is given in $\ref{eq:matrixY}$). \\
	
	Now, as the first row of the matrix $Y$ is $[1,x_{1,2},x_{1,3},...,x_{1,n}]$, from equation $\ref{eq:matrixXZ11entry}$ it is clear that 
	\begin{align}
	a_{1,j}=(1+p)^{x_{1,1}}x_{1,j}, \ j \in [2,n].
	\end{align}
	As the first column of the matrix $Y$ is $[1,0,...,0]^t$ (here `$t$' denotes the transpose of the row vector), we deduce from equation $\ref{eq:matrixXZijentry}$ and $\ref{eq:matrixXZ11entry}$ that 
	\begin{align}
	a_{i,1}=px_{i,1}(1+p)^{x_{1,1}}, \ i \in [2,n],
	\end{align}
	and 
	\begin{align}
	a_{1,1}=(1+p)^{x_{1,1}}.
	\end{align}
	For $j \geq 2$, the $j^{th}$ column of the matrix $Y$ is 
	\begin{align*}
	[x_{1,j},x_{2,j},...,x_{j-1,j},1,0,...,0]^t.
	\end{align*}
	Therefore, multiplying the $i^{th}$ row of the matrix $XZ$ given by $\ref{eq:rowith}$ with the $j^{th}$ 
	column of the matrix $Y$ given above, for $i, j \geq 2, m=\min \{i-1,j-1\},$ we get the following three subcases \\
	( viz. $i>j,i<j$ and $i=j$ ):
	\begin{align*}
	a_{i,j}&=(\sum_{k=1}^mpx_{i,k}x_{k,j}(1+p)^{x_{k,k}-x_{k-1,k-1}})+px_{i,j}(1+p)^{x_{j,j}-x_{j-1,j-1}}, \  2 \leq j < i \leq n ,\\
	a_{i,j}&=(\sum_{k=1}^mpx_{i,k}x_{k,j}(1+p)^{x_{k,k}-x_{k-1,k-1}})+x_{i,j}(1+p)^{x_{i,i}-x_{i-1,i-1}}, \  2 \leq i < j \leq n ,\\
	a_{i,j}&=(\sum_{k=1}^m px_{i,k}x_{k,j}(1+p)^{x_{k,k}-x_{k-1,k-1}})+(1+p)^{x_{i,i}-x_{i-1,i-1}}, \ 2 \leq i=j \leq n.\\
	\end{align*}
	This completes the proof of proposition \ref{matrixXYZexplicit}.	
	\end{proof}
	
	Proposition \ref{matrixXYZexplicit} gives us the following corollary.
	
	\begin{corollary}\label{corval}
			The valuations of the terms $a_{i,j}$ obtained in equations ($\ref{eq:matrixa1j}-\ref{eq:matrixaijiequalj}$)  are the following:
			\begin{align}
			val_p(a_{1,j})&=val_p(x_{1,j}),\ j \geq 2, \label{2.2.18}\\
			val_p(a_{i,1})&=val_p(px_{i,1}), \ i \geq 2,\label{2.2.19}\\
			val_p(a_{1,1}-1)&=1+val_p(x_{1,1}),\label{2.2.20}\\
			val_p(a_{i,j})&=val_p\Big((\sum_{k=1}^m px_{i,k}x_{k,j})+px_{i,j}\Big),\ i,j \geq 2,i>j, m=j-1,\label{2.2.21}\\
			val_p(a_{i,j})&=val_p\Big((\sum_{k=1}^m px_{i,k}x_{k,j})+x_{i,j}\Big), \ i,j \geq 2,i<j, m=i-1\label{2.2.22}\\
			val_p(a_{i,i}-1)&=val_p\Big((\sum_{k=1}^m px_{i,k}x_{k,i})+p(x_{i,i}-x_{i-1,i-1})\Big), \ i \geq 2,m=i-1.\label{2.2.23}
			\end{align}
	\end{corollary}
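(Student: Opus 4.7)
The plan is to apply $val_p$ to the six identities of Proposition \ref{matrixXYZexplicit}, using just two elementary facts about the action of $1+p$ on $\mathbb{Z}_p$.

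The first fact is that $(1+p)^t\in 1+p\mathbb{Z}_p$ is a unit for every $t\in\mathbb{Z}_p$. Applied to $(\ref{eq:matrixa1j})$ and $(\ref{eq:matrixai1})$ this pulls the factor $(1+p)^{x_{1,1}}$ out of $val_p$ and gives $(\ref{2.2.18})$ and $(\ref{2.2.19})$ at once. The second fact is the identity
\begin{equation*}
val_p\bigl((1+p)^t-1\bigr)=1+val_p(t)\qquad(t\in\mathbb{Z}_p),
\end{equation*}
obtained from the binomial expansion
\begin{equation*}
(1+p)^t-1 \;=\; tp\Bigl(1+\tfrac{t-1}{2}p+\tfrac{(t-1)(t-2)}{6}p^2+\cdots\Bigr),
\end{equation*}
whose parenthesised factor is a unit of $\mathbb{Z}_p$ (the coefficients $1/m!$ are $p$-integral on the region of convergence because $p>n+1\geq 3$). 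Specialising $t=x_{1,1}$ yields $(\ref{2.2.20})$.

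For $(\ref{2.2.21})$--$(\ref{2.2.23})$ the recipe is to write each $(1+p)^{x_{k,k}-x_{k-1,k-1}}=1+\eta_k$ with $\eta_k\in p\mathbb{Z}_p$, substitute into $(\ref{eq:matrixaijilargej})$--$(\ref{eq:matrixaijiequalj})$, and expand. Every off-diagonal summand $p\,x_{i,k}x_{k,j}(1+p)^{x_{k,k}-x_{k-1,k-1}}$ splits as $p\,x_{i,k}x_{k,j}+p\,x_{i,k}x_{k,j}\eta_k$ with the second piece in $p^2\mathbb{Z}_p$; on the diagonal, $(1+p)^{x_{i,i}-x_{i-1,i-1}}-1$ contributes the leading term $p(x_{i,i}-x_{i-1,i-1})$ appearing in the right-hand side of $(\ref{2.2.23})$ by the second fact, again with error in $p^2\mathbb{Z}_p$. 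Regrouping writes each entry as $a_{i,j}=(\text{RHS of the corollary})+p^2\cdot C_{i,j}$ with $C_{i,j}\in\mathbb{Z}_p$.

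The only real obstacle is upgrading the ultrametric inequality $val_p(a_{i,j})\geq val_p(\text{RHS})$ to an equality. Since each error term carries an extra $p$-factor from $\eta_k\in p\mathbb{Z}_p$ beyond what the corresponding main term has, the error has strictly larger $p$-adic valuation than the RHS whenever the latter is non-zero, and the ultrametric sum is non-cancelling; the desired equalities $(\ref{2.2.21})$--$(\ref{2.2.23})$ follow.
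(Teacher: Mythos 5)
Your handling of (\ref{2.2.18})--(\ref{2.2.20}) is correct, and since the paper states the corollary with no proof (it is presented as immediate from Proposition \ref{matrixXYZexplicit}), that part is exactly the intended reasoning: the factors $(1+p)^t$ are units, and $val_p\bigl((1+p)^t-1\bigr)=1+val_p(t)$ for $t\in\mathbb{Z}_p$ and $p$ odd.

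The gap is in your last step for (\ref{2.2.21})--(\ref{2.2.23}). Writing $a_{i,j}=R_{i,j}+E_{i,j}$, where $R_{i,j}$ is the right-hand expression of the corollary and $E_{i,j}=\sum_k p\,x_{i,k}x_{k,j}\eta_k+\cdots$, the extra factor $\eta_k\in p\mathbb{Z}_p$ only gives $val_p(E_{i,j})\geq 1+\min_k\bigl\{val_p(p\,x_{i,k}x_{k,j}),\,val_p(p\,x_{i,j})\bigr\}$; that is, the error dominates the \emph{individual terms} of $R_{i,j}$, not $R_{i,j}$ itself. Since $R_{i,j}$ is a sum, its leading terms can cancel, so $val_p(R_{i,j})$ can exceed that minimum by an arbitrary amount, and then $val_p(E_{i,j})<val_p(R_{i,j})$ and the ultrametric equality you invoke breaks down. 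Concretely, for $n=3$, $i=3$, $j=2$ (so $m=1$), take $x_{1,1}=x_{2,2}=1$, $x_{3,1}=x_{1,2}=1$, $x_{3,2}=-1$: formula (\ref{eq:matrixaijilargej}) gives $a_{3,2}=p(1+p)-p=p^2$, of valuation $2$, while $p\,x_{3,1}x_{1,2}+p\,x_{3,2}=0$; with $x_{3,2}=-1+p^5$ one gets valuations $2$ versus $6$. So the strict domination you assert fails, and indeed the literal equality (\ref{2.2.21}) fails in such cancellation configurations. What does survive, and is all that is needed downstream, is that both $val_p(a_{i,j})$ and $val_p(R_{i,j})$ are $\geq\min_k\bigl\{val_p(p\,x_{i,k}x_{k,j}),\,val_p(p\,x_{i,j})\bigr\}$, with equality whenever this minimum is attained without cancellation; the cancellation case is precisely the one isolated and handled separately in Proposition \ref{hadudu} (hypotheses (\ref{eq:eatra}) and (\ref{eq:extra})), where the entry involving $a_{i,j}$ is shown to be redundant in the minimum being computed. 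So you should either weaken the statement to this form or defer the replacement of $a_{i,j}$ by $R_{i,j}$ until it sits inside that minimum, as the paper's subsequent argument effectively does; as an unconditional equality of valuations it cannot be proved.
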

	
The following theorem gives an ordered basis  for the $p$-valuation $\omega$ on $G$.
	\begin{theorem}\label{eq:orderedbasisiwahori}
		The elements \
		\begin{center}
			$\{x_{\beta}(p),h_{\delta}(1+p),x_{\alpha}(1);\beta \in \Phi^-,\delta \in \Pi, \alpha \in \Phi^+\}$
		\end{center} 
		form an ordered basis for the $p$-valuation $\omega$ on $G$, where the ordering is as follows:
		\begin{flushleft}
			(i) first take the lower unipotent matrices in the order given by the (increasing) height function on the roots,\\
			(ii) then take the diagonal elements $h_{\delta}(1+p)$ for $\delta \in \Pi$ starting from the top left extreme to the low right extreme and, 
			
			(iii) finally, take the upper unipotent matrices in the following lexicographic order:\\
			The matrix $(1+E_{i,j})$ comes before $(1+E_{k,l})$ if and only if $i\geq k$ and $i=k$ $\implies$ $j>l$. 
		\end{flushleft}
		
	\end{theorem}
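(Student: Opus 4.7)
The goal is to verify the two defining properties of an ordered basis in the sense of Lazard: the parametrization
\[
(y_\beta, z_\delta, t_\alpha)\in\mathbb{Z}_p^{d}\;\longmapsto\;\prod_{\beta}x_\beta(p)^{y_\beta}\,\prod_{\delta}h_\delta(1+p)^{z_\delta}\,\prod_{\alpha}x_\alpha(1)^{t_\alpha}
\]
(in the order prescribed in the statement) is a bijection onto $G$, and $\omega$ of the resulting product equals the minimum of $\omega(g_i)+val_p(e_i)$ over the list of basis elements $g_i$ with exponents $e_i$.

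For bijectivity I would reduce to Lemma \ref{lemma1}. Since each $x_\gamma$ is a one-parameter additive subgroup, $x_\beta(p)^y=x_\beta(py)$, and multiplication by $p$ is a bijection $\mathbb{Z}_p\leftrightarrow p\mathbb{Z}_p$ matching $y_\beta$ with $u_\beta$; likewise $x_\alpha(1)^t=x_\alpha(t)$ identifies $t_\alpha$ with $w_\alpha$. For the torus part, $h_\delta(1+p)^z=h_\delta((1+p)^z)$, and $z\mapsto(1+p)^z-1$ is a bijection $\mathbb{Z}_p\leftrightarrow p\mathbb{Z}_p$ (using $p>2$), matching $z_\delta$ with $v_\delta$.

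For the valuation identity, set $m:=\min_i(\omega(g_i)+val_p(e_i))$. The inequality $\omega(g)\geq m$ follows from the ultrametric axiom $\omega(gh)\geq \min(\omega(g),\omega(h))$ (valid because $\omega$ is already known to be a $p$-valuation), together with the elementary computations $\omega(x_\beta(py))=1+(j-i)/n+val_p(y)$, $\omega(h_\delta((1+p)^z))=1+val_p(z)$, and $\omega(x_\alpha(t))=(j-i)/n+val_p(t)$ read off from $(\ref{2.2.1})$--$(\ref{2.2.3})$. The reverse inequality $\omega(g)\leq m$ is the main step. Here I would invoke Proposition \ref{matrixXYZexplicit} and Corollary \ref{corval}: each entry $a_{i,j}$ (or $a_{i,i}-1$ on the diagonal) splits into a single \emph{principal term} coming from the exponent of the generator located at position $(i,j)$, plus \emph{cross terms} of the shape $p\,x_{i,k}x_{k,j}$. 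The key observation is that the $\omega$-weighted contribution of each cross term is exactly the sum of the $\omega$-weighted contributions of the two generators involved: for $\beta'=(i,k)$ and $\alpha'=(k,j)$,
\[
\frac{j-i}{n}+val_p(p\,x_{i,k}x_{k,j}) \;=\; \bigl(\omega(x_{\beta'}(p))+val_p(x_{i,k})\bigr)+\bigl(\omega(x_{\alpha'}(1))+val_p(x_{k,j})\bigr),
\]
which is $\geq 2m$, whereas the principal term contributes $\geq m$. Since $p>n+1$ forces $m\geq 1/n>0$, the cross terms are strictly dominated, and $val_p(a_{i,j})$ coincides with the valuation of the principal term.

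It then remains to exhibit one position at which the weighted valuation equals $m$. If the minimum is realized by an upper-triangular generator $x_\alpha(1)^{t_\alpha}$ at $(i_0,j_0)$, the entry $a_{i_0,j_0}$ does the job directly; similarly for lower-triangular generators (using $(\ref{2.2.18})$--$(\ref{2.2.22})$). The main obstacle I anticipate is the diagonal case, since the principal term of $a_{i,i}-1$ is driven by the difference $x_{i,i}-x_{i-1,i-1}$ rather than by the exponent of a single generator $h_\delta(1+p)$. I would bridge this by the telescoping identity
\[
\min_{1\leq i\leq n}val_p(x_{i,i}-x_{i-1,i-1}) \;=\; \min_{1\leq k\leq n-1}val_p(x_{k,k}),
\]
with the conventions $x_{0,0}=x_{n,n}=0$: each $x_{k,k}$ is a partial sum of the differences $d_i=x_{i,i}-x_{i-1,i-1}$ (giving one inequality), while each $d_i$ is a difference of two $x_{k,k}$'s (giving the other). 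This secures an index $i_*$ for which $a_{i_*,i_*}$ realizes the minimum diagonal contribution, and the proof is complete.
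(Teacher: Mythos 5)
Your proposal is correct, but it organizes the argument differently from the paper. The paper also reduces bijectivity to Lemma \ref{lemma1} and feeds on the same entrywise computations (Proposition \ref{matrixXYZexplicit}, Corollary \ref{corval}), but it then proves the valuation identity in one stroke: it orders the positions $(i,j)$ by $\min\{i,j\}$ and runs an induction over entries, with Proposition \ref{hadudu} showing at each stage that $\min\{S_{q-1},v_{i,j}\}=\min\{S_{q-1},V_{i,j}\}$ because every cross term $p\,x_{i,k}x_{k,j}$ is bounded below by generator contributions already present in $S_{q-1}$. You instead split the identity into two inequalities: the bound $\omega(g)\geq m$ comes for free from the ultrametric axiom of the $p$-valuation (legitimate, since the paper already cites that $\omega$ is a $p$-valuation) applied to $\omega(g_i^{e_i})=\omega(g_i)+val_p(e_i)$, and the bound $\omega(g)\leq m$ comes from a single witness entry where the principal term has weighted valuation exactly $m$ while each cross term, being exactly the sum of two generator contributions, is $\geq 2m>m$; the diagonal case is settled by the telescoping identity $\min_i val_p(x_{i,i}-x_{i-1,i-1})=\min_k val_p(x_{k,k})$, which the paper uses only implicitly when it passes from $V_{i,i}$ to the final formula in terms of $val_p(x_{t,t})$. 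Your route buys a shorter and conceptually cleaner argument (no ordering of positions, no stagewise induction) at the price of invoking the $p$-valuation axioms as a black box, whereas the paper's induction is self-contained and in effect reproves the ultrametric bound by hand. One phrasing caveat: your intermediate sentence claiming that $val_p(a_{i,j})$ always coincides with the valuation of the principal term is not true at an arbitrary position (the principal term can have larger valuation than the cross terms, e.g.\ when the corresponding exponent vanishes); it is true, and is all you use, at the chosen witness position where the principal term realizes $m$, so the argument itself is sound once that claim is localized.
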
	
	
	Point (iii) means that for the upper unipotent matrices we start with the low and right extreme and then fill the lines from the right, going up.  \\
\begin{proof}	
Let $g_1,...,g_d$ ($d=|\Phi|+|\Pi| $) denote the ordered basis as in the statement of Theorem $\ref{eq:orderedbasisiwahori}$. Then, by lemma \ref{lemma1}, we have a bijective map
\begin{align*}
\mathbb{Z}_p^d &  \rightarrow G\\
(x_{n,1},...,x_{1,2}) & \rightarrow g_1^{x_{n,1}}\cdots g_d^{x_{1,2}}.
\end{align*}

	In the following, our objective is to show 
	\begin{flushleft}
		$\omega(g_1^{x_{n,1}}\cdots g_d^{x_{1,2}})=\omega(XZY)$\\
		$=\min \Big(\frac{j-i}{n}+val_p(a_{i,j})_{i \neq j},val_p(a_{i,i}-1)\Big),$\\
		$=\min\limits_{\substack{(i_1,j_1)=\beta \in \Phi^-,\ (i_2,j_2)=\alpha \in \Phi^+\\ (t,t+1)=\delta \in \Pi, t \in [1,n-1]}}\Big\{\omega(x_{\beta}(p))+val_p(x_{i_1,j_1}),\omega(x_{\alpha}(1))+val_p(x_{i_2,j_2}),\omega(h_{\delta}(1+p))+val_p(x_{t,t})\Big\}$.
	\end{flushleft}

	Let us define, for $1 \leq i,j \leq n$,
	\begin{align}\label{eq:valuationmatrixiwahori}
	v_{i,j}&=\frac{j-i}{n}+val_p(a_{i,j});(i \neq j),\\
	v_{i,i}&=val_p(a_{i,i}-1).
	\end{align}
	Then we have to show that 
	
	\begin{align}\label{eq:omegaxzy}
	&\omega(g_1^{x_{n,1}}\cdots g_d^{x_{1,2}})=\omega(XZY)=\min_{1 \leq i,j \leq n}(v_{i,j})\\
	&=\min\limits_{\substack{(i_1,j_1)=\beta \in \Phi^-,\ (i_2,j_2)=\alpha \in \Phi^+\\ (t,t+1)=\delta \in \Pi}}\Big\{\omega(x_{\beta}(p))+val_p(x_{i_1,j_1}),\omega(x_{\alpha}(1))+val_p(x_{i_2,j_2}),\omega(h_{\delta}(1+p))+val_p(x_{t,t})\Big\},
	\end{align}
	for $i_1>j_1,i_2<j_2,t=1,...,n-1$.
	The first two equalities of the above equation are obvious (by definition). To prove the last equality we will rearrange the $v_{i,j}$'s and then use induction. First we order the $v_{i,j}$'s, appearing in equation $\ref{eq:omegaxzy}$ in such a way that the indices are given by:

	\begin{equation}\label{eq:orderequation}
	(\text{for all} \ 1 \leq i,j,i^{'},j^{'} \leq n)\ \text{if}  \ \min\{i^{'},j^{'}\}<\min\{i,j\} \ \text{then} \ v_{i^{'},j^{'}} \ \text{comes before} \ v_{i,j}.
	\end{equation} 
	Such an ordering of the $v_{i,j}$'s can be achieved by first taking the $v_{i,j}$'s in the first row and the first column starting from the top left extreme ($v_{1,1},..,v_{1,n},v_{2,1},...,v_{n,1}$), then the second row and the second column ($v_{2,2},...,v_{2,n},v_{3,2},...,v_{n,2}$) and so on. \\
	
	To compute $\omega(g_1^{x_{n,1}}\cdots g_d^{x_{1,2}})=\min(v_{i,j})$ we use induction: As the basic step of the induction process (the zero-th step) we compute $S_0:=\min(v_{1,1},...,v_{1,n},v_{2,1},...,v_{n,1})=\min_{2 \leq i,j \leq n}(v_{1,1},v_{1,j},v_{i,1})$ and then we proceed in stages, adding one $v_{i,j}$ at each stage of induction according to the prescribed order of the $v_{i,j}$, i.e. in the first stage we compute $S_1:=\min\{S_0,v_{2,2}\}$, then in the second stage we compute $S_2:=\min(S_1,v_{2,3})$ and so on until we have completed computing minimum of all the $v_{i,j}$'s. Note that in the last stage, i.e. at the stage $n^2-n-(n-1)$, $S_{n^2-2n+1}=\min_{1 \leq i,j \leq n}\{v_{i,j}\}=\omega(g_1^{x_{n,1}}\cdots g_d^{x_{1,2}})$ (cf. $(\ref{eq:omegaxzy})$ ). \\
	
	From the definition of $v_{i,j}$ (see equation $\ref{eq:valuationmatrixiwahori}$)  and equations ($\ref{2.2.18}-\ref{2.2.20}$) of Corollary \ref{corval} we get, for $2 \leq i,j \leq n$, that
	\begin{flushleft}
		$\min\limits_{\substack{2 \leq i,j \leq n}}(v_{1,1},v_{1,j},v_{i,1})=\min\limits_{\substack{2 \leq i,j \leq n}}\Big\{ \frac{j-1}{n}+val_p(a_{1,j}),\frac{1-i}{n}+val_p(a_{i,1}),val_p(a_{1,1}-1)\Big\}$\\
		$=\min\limits_{\substack{2 \leq i,j \leq n}}\Big\{ \frac{j-1}{n}+val_p(x_{1,j}),\frac{1-i}{n}+1+val_p(x_{i,1}),1+val_p(x_{1,1})\Big\}$\\
		$=\min\limits_{\substack{\alpha=(1,j),\beta=(i,1) \\ \delta=(1,2), 2 \leq i,j \leq n}}\Big\{\omega(x_{\alpha}(1))+val_p(x_{1,j}),\omega(x_{\beta}(p))+val_p(x_{i,1}),\omega(h_{\delta}(1+p))+val_p(x_{1,1})\Big\}$.
	\end{flushleft}
	
	Then, at each stage of the induction, say $q^{th}$ stage, $q \in \mathbb{N}$, of computing the minimum, we compute $S_q=\min(S_{q-1},v_{i,j})$ for some $(i,j)$ with $i,j \geq 2$, where $S_{q-1}$ is the minimum computed in the $(q-1)^{th}$ stage [the subscript $q$ depends on $(i,j)$]. \\
	Henceforth, we fix the coordinate $(i,j)$ appearing in the definition of $S_q$. Note that $S_{q-1}$ is by definition minimum of all the $v_{i^{'},j^{'}}$ appearing before $v_{i,j}$ in the ordering and by induction hypothesis we can assume that 
	
	\begin{equation}\label{eq:Sq-1}
	S_{q-1}=\min_{H} \Big\{\omega(x_{\alpha}(1))+val_p(x_{i_2^{'},j_2^{'}}),\omega(x_{\beta}(p))+val_p(x_{i_1^{'},j_1^{'}}),\omega(h_\delta(1+p))+val_p(x_{t,t})\Big\},
	\end{equation}
	where $H=\Big\{\alpha=(i_2^{'},j_2^{'}) \in \Phi^+, \beta=(i_1^{'},j_1^{'}) \in \Phi^-, \delta=(t,t+1) \in \Pi 
	\ \text{such}  \ \text{that} \  
	v_{i_1^{'},j_1^{'}}<v_{i,j},v_{i_2^{'},j_2^{'}}<v_{i,j},v_{t,t}<v_{i,j}\Big\}$. ( Here, the symbol $<$ denotes the order function and not the ordinary less than symbol, that is, when we write $v_{i_1^{'},j_1^{'}}<v_{i,j}$ we mean that $v_{i_1^{'},j_1^{'}}$ comes before $v_{i,j}$ in the order given by equation $\ref{eq:orderequation}$). By proposition \ref{hadudu} below, we have
	
	\begin{equation}\label{eq:Shq}
		S_q:=\min\{S_{q-1},v_{i,j}\}=\min \{S_{q-1},V_{i,j}\},
		\end{equation}
		where 
		\begin{align*}
		V_{i,j}&=\omega(x_{\beta_2=(i,j)}(p))+val_p(x_{i,j});(\text{if} \ i>j)\\
		V_{i,j}&=\omega(x_{\alpha_2=(i,j)}(1))+val_p(x_{i,j});(\text{if} \ i<j)\\
		V_{i,j}&=\omega(h_{\delta=(i,i+1)}(1+p))+val_p(x_{i,i}-x_{i-1,i-1});(\text{if} \ i=j).
		\end{align*}
		
	Assume \ref{eq:Shq} for now (for the proof see proposition \ref{hadudu} below).

	Therefore, in the last stage 
	(i.e when $q=n^2-n-(n-1)$) of our induction process we will obtain 
	\begin{flushleft}
		$\omega(g_1^{x_{n,1}}\cdots g_d^{x_{1,2}})$\\
		$=\min \Big(\frac{j-i}{n}+val_p(a_{i,j})_{i \neq j},val_p(a_{i,i}-1)\Big),$\\
		$=\min_{1 \leq i,j \leq n}(v_{i,j})$ (with ordered $v_{i,j}$),\\
		$=S_{q=n^2-2n+1}$,\\
		$=\min\limits_{\substack{(i_1,j_1)=\beta \in \Phi^-,\ (i_2,j_2)=\alpha \in \Phi^+\\ (t,t+1)=\delta \in \Pi, t \in [1,n-1]}}\Big\{\omega(x_{\beta}(p))+val_p(x_{i_1,j_1}),\omega(x_{\alpha}(1))+val_p(x_{i_2,j_2}),\omega(h_{\delta}(1+p))+val_p(x_{t,t})\Big\}$.
	\end{flushleft}
	This complete our proof of Theorem $\ref{eq:orderedbasisiwahori}$. 	
	\end{proof}
		
		\begin{proposition}\label{hadudu}
			With notations as in the proof of Theorem \ref{eq:orderedbasisiwahori}, we have \begin{equation}\label{eq:Sq}
			S_q:=\min\{S_{q-1},v_{i,j}\}=\min \{S_{q-1},V_{i,j}\},
			\end{equation}
			where 
			\begin{align*}
			V_{i,j}&=\omega(x_{\beta_2=(i,j)}(p))+val_p(x_{i,j});(\text{if} \ i>j)\\
			V_{i,j}&=\omega(x_{\alpha_2=(i,j)}(1))+val_p(x_{i,j});(\text{if} \ i<j)\\
			V_{i,j}&=\omega(h_{\delta=(i,i+1)}(1+p))+val_p(x_{i,i}-x_{i-1,i-1});(\text{if} \ i=j).
			\end{align*}
		\end{proposition}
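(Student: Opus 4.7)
The plan is to decompose, in each of the three cases $i>j$, $i<j$, $i=j$, the expression of $v_{i,j}$ furnished by Corollary \ref{corval} into one distinguished ``leading'' term that reproduces $V_{i,j}$ exactly and a family of ``cross'' terms $p x_{i,k} x_{k,j}$ ($1 \le k < \min(i,j)$) whose $\omega$-weight I will bound below by $S_{q-1}$ using the induction hypothesis. Once this bound is established, a short ultrametric case distinction closes the argument.

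Take first the case $i > j \ge 2$. By \eqref{2.2.21}, $v_{i,j} = \frac{j-i}{n} + val_p\bigl(\sum_{k=1}^{j-1} p x_{i,k} x_{k,j} + p x_{i,j}\bigr)$. The leading term $p x_{i,j}$ contributes weight $\frac{j-i}{n} + 1 + val_p(x_{i,j}) = V_{i,j}$ by \eqref{2.2.1}. For a cross term $p x_{i,k} x_{k,j}$ I would use the telescoping identity $\frac{j-i}{n} = \frac{k-i}{n} + \frac{j-k}{n}$ to rewrite its weight as
\[
\tfrac{j-i}{n} + 1 + val_p(x_{i,k}) + val_p(x_{k,j}) = \bigl[\tfrac{k-i}{n} + 1 + val_p(x_{i,k})\bigr] + \bigl[\tfrac{j-k}{n} + val_p(x_{k,j})\bigr].
\]
The first bracket is $\omega(x_{(i,k)}(p)) + val_p(x_{i,k})$, the $V$-quantity attached to the negative root $(i,k)$, and the second is $\omega(x_{(k,j)}(1)) + val_p(x_{k,j})$, that attached to the positive root $(k,j)$. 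Since $k < j = \min(i,j)$ one has $\min\{i,k\} \le k < \min(i,j)$ and $\min\{k,j\} \le k < \min(i,j)$, so by \eqref{eq:orderequation} both of $(i,k)$ and $(k,j)$ precede $(i,j)$; hence by the inductive definition of $S_{q-1}$ each bracket is $\ge S_{q-1}$. Because $\omega$ is a $p$-valuation, $S_{q-1} > 0$, so the cross term has weight $\ge 2 S_{q-1} \ge S_{q-1}$.

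The remaining subcases are handled by the same decomposition. For $i < j$, \eqref{2.2.22} provides the leading term $x_{i,j}$ with weight $\frac{j-i}{n} + val_p(x_{i,j}) = V_{i,j}$; for the diagonal case $i = j$, \eqref{2.2.23} provides $p(x_{i,i} - x_{i-1,i-1})$ with weight $1 + val_p(x_{i,i} - x_{i-1,i-1}) = V_{i,i}$; in both subcases the cross-term analysis is identical (the same telescoping produces a negative-root half $(i,k)$ and a positive-root half $(k,i)$ with $k < i$). Once the cross-term bound is in hand, a one-line ultrametric argument finishes the proof: if $V_{i,j} < S_{q-1}$, the leading term strictly dominates every other term in the sum and $v_{i,j} = V_{i,j}$, while if $V_{i,j} \ge S_{q-1}$ every contribution is $\ge S_{q-1}$ and $v_{i,j} \ge S_{q-1}$; in both scenarios $\min\{S_{q-1}, v_{i,j}\} = \min\{S_{q-1}, V_{i,j}\}$, which is the claim.

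The main obstacle is the bookkeeping needed to identify each ``half'' of a cross term with a $V$-type quantity belonging to an earlier stage of the induction and to verify the arithmetic of the $\omega$-weights \eqref{2.2.1}--\eqref{2.2.3}. Once one notices that the summation index $k$ in Corollary \ref{corval} satisfies $k < \min(i,j)$ uniformly in all three subcases, so that both $(i,k)$ and $(k,j)$ drop to a strictly earlier level of the ordering \eqref{eq:orderequation}, the three subcases of the proposition run in parallel.
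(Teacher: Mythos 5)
Your argument is correct and is essentially the paper's: both identify the leading term of $a_{i,j}$ furnished by Corollary \ref{corval} as carrying weight exactly $V_{i,j}$, observe that the roots $(i,k)$ and $(k,j)$ with $k<\min\{i,j\}$ precede $(i,j)$ in the order \eqref{eq:orderequation} so the corresponding terms already occur in $S_{q-1}$, and conclude by an ultrametric case distinction (the paper splits on $val_p(px_{i,j})$ versus $val_p(\sum_k px_{i,k}x_{k,j})$ and bounds the cross-sum by discarding the factors $x_{k,j}$, whereas you bound each cross term by telescoping its weight into the two $V$-quantities and compare $V_{i,j}$ with $S_{q-1}$). The only blemish is notational: in the subcase $i<j$ the positive-root half of a cross term is $(k,j)$, not $(k,i)$.
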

		\begin{proof}
			
		To prove equation $\ref{eq:Sq}$, we first note that  for all $k=1,...,m=\min\{i-1,j-1\}$, we have $k<i,k<j$, hence (cf. equation $\ref{eq:orderequation}$) the terms $\omega(x_{\beta_1=(i,k)}(p))+val_p(x_{i,k})=\frac{k-i}{n}+val_p(px_{i,k})$ and $\omega(x_{\alpha_1=(k,j)}(1))+val_p(x_{k,j})=\frac{j-k}{n}+val_p(x_{k,j})$ belong to R.H.S of equation $\ref{eq:Sq-1}$ (because since $k<i$ and $k<j$,  equation $\ref{eq:orderequation}$ gives $v_{i,k}<v_{i,j}$ and $v_{k,j}<v_{i,j}$). Thus, to prove equation $\ref{eq:Sq}$, it suffices to find 
		{
		\begin{equation}\label{eq:findminimum}
		\min\limits_{\substack{k=1,...,m}}\Big\{ \frac{k-i}{n}+val_p(px_{i,k}),\frac{j-k}{n}+val_p(x_{k,j}),v_{i,j}\Big\}
		\end{equation}
		and show that 
		\begin{align*}
		&\min\limits_{\substack{k=1,...,m}}\Big\{ \frac{k-i}{n}+val_p(px_{i,k}),\frac{j-k}{n}+val_p(x_{k,j}),v_{i,j}\Big\}\\
		&=\min\limits_{\substack{\alpha_1=(k,j),\beta_1=(i,k) \\ k=1,...,m}}\Big\{ \omega(x_{\alpha_1=(k,j)}(1))+val_p(x_{k,j}),\omega(x_{\beta_1=(i,k)}(p))+val_p(x_{i,k}),V_{i,j}\Big\}.
		\end{align*} 
	}
		We divide this problem in three cases, first when $i>j$, second when $i<j$ and third when $i=j$. These three cases are dealt in \textbf{(I),(II),(III)} below.\\

		First we consider the case $i,j \geq 2,i >j, m:=\min\{i-1,j-1\}=j-1$. So $m<j<i$. Then
		{
		\begin{flushleft}
			$\min\limits_{\substack{k=1,...,m}}\Big\{ \frac{k-i}{n}+val_p(px_{i,k}),\frac{j-k}{n}+val_p(x_{k,j}),v_{i,j}=\frac{j-i}{n}+val_p(a_{i,j})\Big\}$\\
			$=\min\limits_{\substack{k=1,...,m}}\Big\{ \frac{k-i}{n}+1+val_p(x_{i,k}),\frac{j-k}{n}+val_p(x_{k,j}),\frac{j-i}{n}+val_p((\sum_{k=1}^m px_{i,k}x_{k,j})+px_{i,j})\Big\}$\\
			$=\min\limits_{\substack{\beta_1=(i,k),\alpha_1=(k,j) \\ k=1,...m}}\Big\{\omega(x_{\beta_1}(p))+val_p(x_{i,k}),\omega(x_{\alpha_1}(1))+val_p(x_{k,j}),\frac{j-i}{n}+val_p((\sum_{k=1}^m px_{i,k}x_{k,j})+px_{i,j})\Big\}$ \\
			$=\min\limits_{\substack{\beta_1=(i,k),\alpha_1=(k,j)\\\beta_2=(i,j),  k=1,...m}}\Big\{\omega(x_{\beta_1}(p))+val_p(x_{i,k}),\omega(x_{\alpha_1}(1))+val_p(x_{k,j}),\omega(x_{\beta_2}(p))+val_p(x_{i,j})\Big\}$\textbf{---(I)},
		\end{flushleft}
	}
		where the first equality follows from $\ref{2.2.21}$ and the second equality follows from $(\ref{2.2.1}-\ref{2.2.3})$. To prove the last equality, we notice that if $val_p(px_{i,j})<val_p(\sum_{k=1}^m px_{i,k}x_{k,j})$ then it is obvious. On the other hand, if 
		\begin{equation}\label{eq:eatra}
		val_p(px_{i,j}) \geq val_p(\sum_{k=1}^m px_{i,k}x_{k,j})
		\end{equation}
		then both the terms (L.H.S and R.H.S of the last equality of \textbf{I}) equals 
		\begin{center}
			$\min\limits_{\substack{\beta_1=(i,k),\alpha_1=(k,j) \\ k=1,...m}}\Big\{\omega(x_{\beta_1}(p))+val_p(x_{i,k}),\omega(x_{\alpha_1}(1))+val_p(x_{k,j})\Big\}$,
		\end{center}
		because 
		{
		\begin{align}
		\frac{j-i}{n}+val_p((\sum_{k=1}^m px_{i,k}x_{k,j})+px_{i,j}) &\geq \frac{j-i}{n}+val_p(\sum_{k=1}^m px_{i,k}x_{k,j}) \ (\text{using} \ (\ref{eq:eatra}))\label{2.2.33}\\
		&\geq \frac{j-i}{n}+\min\limits_{\substack{k=1,...,m}}\{val_p(px_{i,k})\}\label{2.2.34}\\
		&= \min\limits_{\substack{k=1,...,m}}\Big\{\frac{j-i}{n}+val_p(px_{i,k})\Big\}\label{2.2.35}\\
		&\geq \min\limits_{\substack{k=1,...,m}}\Big\{\frac{k-i}{n}+val_p(px_{i,k})\Big\} \ (\text{as} \ k\leq m=j-1<j)\label{2.2.36}\\
		&=\min_{k=1,...,m}\Big\{\omega(x_{\beta_1=(i,k)}(p))+val_p(x_{i,k})\Big\}\label{2.2.37}.
		\end{align}
		and
		\begin{align*}
		\omega(x_{\beta_2=(i,j)}(p))+val_p(x_{i,j})&=\frac{j-i}{n}+val_p(px_{i,j})\\
		&\geq \frac{j-i}{n}+val_p(\sum_{k=1}^m px_{i,k}x_{k,j}) \ (\text{using} \ (\ref{eq:eatra}))\\
		& \geq \min_{k=1,...,m}\Big\{\omega(x_{\beta_1=(i,k)}(p))+val_p(x_{i,k})\Big\} \ (\text{using} \  (\ref{2.2.33}-\ref{2.2.37})).
		\end{align*}
	}
		The argument for the case $i,j \geq 2,i<j,m=i-1<i<j$ is similar to our previous computation and so we omit it. The result that we obtain in that case is the following:
		{
		\begin{flushleft}
			$\min\limits_{\substack{k=1,...,m}}\Big\{ \frac{k-i}{n}+val_p(px_{i,k}),\frac{j-k}{n}+val_p(x_{k,j}),v_{i,j}=\frac{j-i}{n}+val_p(a_{i,j})\Big\}$\\
			$=\min\limits_{\substack{\beta_1=(i,k),\alpha_1=(k,j) \\ k=1,...m}}\Big\{\omega(x_{\beta_1}(p))+val_p(x_{i,k}),\omega(x_{\alpha_1}(1))+val_p(x_{k,j}),\frac{j-i}{n}+val_p((\sum_{k=1}^m px_{i,k}x_{k,j})+x_{i,j})\Big\}$\\
			$=\min\limits_{\substack{\beta_1=(i,k),\alpha_1=(k,j)\\ \alpha_2=(i,j),  k=1,...m}}\Big\{\omega(x_{\beta_1}(p))+val_p(x_{i,k}),\omega(x_{\alpha_1}(1))+val_p(x_{k,j}),\omega(x_{\alpha_2}(1))+val_p(x_{i,j})\Big\}$\textbf{---(II)}.
		\end{flushleft}
		Now, for $i,j \geq 2,i=j,m:=\min\{i-1,j-1\}=i-1=j-1$,
		\begin{flushleft}
			$\min\limits_{\substack{k=1,...,m}}\Big\{ \frac{k-i}{n}+val_p(px_{i,k}),\frac{j-k}{n}+val_p(x_{k,j}),v_{i,i}=val_p(a_{i,i}-1)\Big\}$\\
			$=\min\limits_{\substack{k=1,...,m}}\Big\{ \frac{k-i}{n}+val_p(px_{i,k}),\frac{j-k}{n}+val_p(x_{k,j}),val_p((\sum_{k=1}^m px_{i,k}x_{k,i})+p(x_{i,i}-x_{i-1,i-1}))\Big\}$\\
			$=\min\limits_{\substack{k=1,...,m}}\Big\{ \frac{k-i}{n}+val_p(px_{i,k}),\frac{j-k}{n}+val_p(x_{k,j}),1+val_p(x_{i,i}-x_{i-1,i-1})\Big\}$\\
			$=\min\limits_{\substack{\beta_1=(i,k),\alpha_1=(k,j),\\ \delta=(i,i+1), k=1,...m}}\Big\{\omega(x_{\beta_1}(p))+val_p(x_{i,k}),\omega(x_{\alpha_1}(1))+val_p(x_{k,j}),\omega(h_{\delta}(1+p))+val_p(x_{i,i}-x_{i-1,i-1})\Big\},$
		\end{flushleft}
		\hfill\(\textbf{----(III)}\)  \\
		
	}
		
		where the first equality follows from $\ref{2.2.23}$ and the third equality follows from $\ref{2.2.2}$. To prove the second equality, we notice that if $val_p(p(x_{i,i}-x_{i-1,i-1}))<val_p(\sum_{k=1}^m px_{i,k}x_{k,i})$ then it is obvious. On the contrary, if
		\begin{equation}
		\label{eq:extra}
		val_p(p(x_{i,i}-x_{i-1,i-1}))\geq val_p(\sum_{k=1}^m px_{i,k}x_{k,i})
		\end{equation}
		then both the terms (L.H.S and R.H.S of the second equality of \textbf{III}) equals 
		\begin{center}
			$\min\limits_{\substack{k=1,...,m}}\Big\{ \frac{k-i}{n}+val_p(px_{i,k}),\frac{j-k}{n}+val_p(x_{k,j})\Big\}$
		\end{center}
		as 
		{
		\begin{align}
		val_p((\sum_{k=1}^m px_{i,k}x_{k,i})+p(x_{i,i}-x_{i-1,i-1})) &\geq val_p(\sum_{k=1}^m px_{i,k}x_{k,i}) \ (\text{using} \ (\ref{eq:extra})) \label{2.2.39}\\
		& \geq \min\limits_{\substack{k=1,...,m}}\Big\{val_p(px_{i,k})\Big\}\label{2.2.40}\\
		&> \min\limits_{\substack{k=1,...,m}}\Big\{\frac{k-i}{n}+val_p(px_{i,k})\Big\} \ (\text{as} \ k<i)\label{2.2.41}
		\end{align}
		and 
		\begin{align*}
		1+val_p((x_{i,i}-x_{i-1,i-1})) &\geq  val_p(\sum_{k=1}^m px_{i,k}x_{k,i}) \ (\text{using} \ (\ref{eq:extra})) \\
		&> \min\limits_{\substack{k=1,...,m}}\Big\{\frac{k-i}{n}+val_p(px_{i,k})\Big\} \ (\text{using} \ (\ref{2.2.39}-\ref{2.2.41}))
		\end{align*}
	}
		This completes the demonstration of equation  $\ref{eq:Sq}$.

		\end{proof}

\begin{remark}\label{rem:ordered}
	Note that in theorem \ref{eq:orderedbasisiwahori}, we have ordered the lower unipotent matrices by the increasing height function on the roots. Theorem \ref{eq:orderedbasisiwahori} also holds true (with the same proof) if we order the lower unipotent matrices by the following (lexicographic) order:
	
	the matrix $(1+pE_{i,j})$ comes before $(1+pE_{k,l})$ if and only if $i<k$ and $i=k \implies j<l$,
	
	that is, we start with the top and left extreme and the fill the lines from the left, going down.
	
	With $x_{k,1} \in \mathbb{Z}_p$, 
	\begin{equation*}
	(1+px_{2,1}E_{2,1})(1+px_{3,1}E_{3,1})(1+px_{3,2}E_{3,2})\cdots (1+px_{n,n-1}E_{n,n-1})=I+\sum\limits_{\substack{i\in [2,n] \\ j\in [1,i-1]}}px_{i,j}E_{i,j},
	\end{equation*}
	(this is $X$ in equation \ref{eq:matrixX}). Indeed, 
	for any $b\in \mathbb{N}$, if we take the product of the first $b$ terms in the L.H.S of the above equation, the set of column entries which appears in the subscript of the elementary matrices of the product, is disjoint from the row entry which appears in the subscript of the elementary matrix occurring in the $(b+1)^{th}$ term of the product in the L.H.S. We also use $E_{i,j}E_{k,l}=E_{i,l}$ for $j=k$ and $E_{i,j}E_{k,l}=0$ if $j \neq k$.
	
	The rest of the proof of Theorem \ref{eq:orderedbasisiwahori} goes without any change.
\end{remark}

\section{Relations in the Iwasawa algebra}\label{sub:two}
In this section we first recall the notion of the Iwasawa algebra of $G$ which is naturally  isomorphic as a $\mathbb{Z}_p$-module to a commutative ring of power series in several  variable over $\mathbb{Z}_p$.
Then we find the relations between those variables in order to give a ring theoretic presentation of the Iwasawa algebra (lemma \ref{relations}). 

This section is organized as follows. In section $\ref{sub:Iwasawa}$, we recall the notion of the Iwasawa algebra of $G$, and for $\beta \in \Phi^-,\alpha \in \Phi^+, \delta \in \Pi$, we identify (as a $\mathbb{Z}_p$-module) the Iwasawa algebra of $G$ with the ring of power series in the variables $U_{\beta},V_{\alpha},W_{\delta}$ over $\mathbb{Z}_p$. This isomorphism is given by sending $1+V_{\alpha}\mapsto x_{\alpha}(1), 1+W_{\delta} \mapsto h_{\delta}(1+p)$ and $1+U_{\beta} \mapsto x_{\beta}(p)$. As the Iwasawa algebra is non-commutative, this isomorphism is obviously not a ring isomorphism. Therefore, in section $\ref{sub:relationsinIwahori}$, we study the products of the variables "in the wrong order", viewing them as elements of the Iwasawa algebra and then we find the relations among them $(\ref{eq:firstIwahori}-\ref{eq:fifteenIwahori})$. Finally,  we consider $\mathcal{A}$ to be the non-commutative power series over $\mathbb{Z}_p$ in the variables $V_{\alpha}$, $W_{\delta}$, and $U_{\beta}$ corresponding to the order of roots as in Theorem \ref{eq:orderedbasisiwahori} and construct a natural map $\varphi:\mathcal{A} \rightarrow \Lambda(G)$ which  factors through the quotient of $\mathcal{A}$ modulo the relations $(\ref{eq:firstIwahori}-\ref{eq:fifteenIwahori})$.

\subsection{Iwasawa algebra}\label{sub:Iwasawa}
After Theorem $\ref{eq:orderedbasisiwahori}$, we have a homeomorphism $c:\mathbb{Z}_p^d\rightarrow G$. Let $C(G)$ be continuous functions from $G$ to $\mathbb{Z}_p$. The map $c$ induces, by pulling back functions, an isomorphism of $\mathbb{Z}_p$-modules
\begin{center}
	$c^*:C(G) \simeq C(\mathbb{Z}_p^d)$.
\end{center}
\begin{definition}
Let $\Lambda(G)$  be the Iwasawa algebra of $G$ over $\mathbb{Z}_p$, that is, $$\Lambda (G):=\varprojlim _{N\in \mathcal{N}(G)}(G/N),$$ where $\mathcal{N}(G)$ is the set of open normal subgroups in $G$.	
\end{definition}
 Now, lemma 22.1 of \cite{Lie} shows that 
\begin{center}
	$\Lambda (G)=\text{Hom}_{\mathbb{Z}_p}(C(G),\mathbb{Z}_p)$.
\end{center}
So, dualizing the map $c^*$, we get an isomorphism of $\mathbb{Z}_p$-modules 
\begin{center}
	$c_{*}=\Lambda (\mathbb{Z}_p^d)\cong \Lambda (G)$.
\end{center}
\begin{lemma}[Prop. $20.1$ of $ \cite{Lie}$]\label{ctilde}
	We have the following isomorphism of $\mathbb{Z}_p$-modules
	\begin{center}
		$\tilde{c}:\mathbb{Z}_p[[U_{\beta},V_{\alpha},W_{\delta}; \beta \in \Phi^-,\alpha \in \Phi^+, \delta \in \Pi]]\cong \Lambda(G)$\\
		$1+V_{\alpha}\longmapsto x_{\alpha}(1)$\\
		$1+W_{\delta} \longmapsto h_{\delta}(1+p)$\\
		$1+U_{\beta} \longmapsto x_{\beta}(p)$.
	\end{center}
\end{lemma}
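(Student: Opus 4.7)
The plan is to deduce this from Theorem \ref{eq:orderedbasisiwahori} together with the classical identification of the Iwasawa algebra of $\mathbb{Z}_p^d$ with a power series ring over $\mathbb{Z}_p$, which is essentially a dualized form of Mahler's theorem. I will carry this out in three steps.

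First, I would invoke Theorem \ref{eq:orderedbasisiwahori}, which produces an ordered basis $g_1,\dots,g_d \in G$ (where $d = |\Phi| + |\Pi|$) with $g_i$ running through $x_\beta(p)$, $h_\delta(1+p)$, $x_\alpha(1)$ in the prescribed order. This gives a homeomorphism of topological spaces
\[
c : \mathbb{Z}_p^d \longrightarrow G, \qquad (z_1,\dots,z_d) \longmapsto g_1^{z_1}\cdots g_d^{z_d},
\]
as stated just before Definition \ref{defiw} -- the map is a bijection by part (1) of the ordered basis property, and continuity in both directions follows from the compatibility of $\omega$ with $\min_i(\omega(g_i) + \val_p(z_i))$ from part (2).

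Next, pulling back continuous $\mathbb{Z}_p$-valued functions gives a topological isomorphism $c^{*} : C(G) \xrightarrow{\sim} C(\mathbb{Z}_p^d)$ of $\mathbb{Z}_p$-modules. Using the identification $\Lambda(G) = \Hom_{\mathbb{Z}_p}(C(G),\mathbb{Z}_p)$ quoted from \cite{Lie} (Lemma 22.1), dualization yields
\[
c_{*} : \Lambda(\mathbb{Z}_p^d) \xrightarrow{\sim} \Lambda(G)
\]
as $\mathbb{Z}_p$-modules, sending the Dirac mass $\delta_{(0,\dots,1,\dots,0)}$ (with the $1$ in position $i$) to $g_i \in \Lambda(G)$.

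Finally, I would identify $\Lambda(\mathbb{Z}_p^d)$ with the power series ring. For a single copy of $\mathbb{Z}_p$, Mahler's theorem says that every $f \in C(\mathbb{Z}_p)$ expands uniquely as $f(x) = \sum_{n\geq 0} a_n \binom{x}{n}$ with $a_n \in \mathbb{Z}_p$ and $a_n \to 0$, so $C(\mathbb{Z}_p)$ has Mahler basis $\{\binom{x}{n}\}_{n\geq 0}$. The dual basis inside $\Lambda(\mathbb{Z}_p)$, assembled into a generating series, gives the classical isomorphism $\Lambda(\mathbb{Z}_p) \cong \mathbb{Z}_p[[T]]$ sending the group-like element $[z]$ to $(1+T)^z = \sum_n \binom{z}{n} T^n$; in particular $[1] \mapsto 1+T$. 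Taking tensor products over $\mathbb{Z}_p$ (or equivalently, iterating this for each of the $d$ factors), one obtains
\[
\Lambda(\mathbb{Z}_p^d) \;\cong\; \mathbb{Z}_p[[T_1,\dots,T_d]],
\]
with the $i$-th standard generator of $\mathbb{Z}_p^d$ mapping to $1+T_i$. Composing with $c_{*}$ and relabeling the variables $T_i$ as $U_\beta$, $W_\delta$, $V_\alpha$ according to which ordered basis element they correspond to produces the stated $\tilde{c}$, with $1+U_\beta \mapsto x_\beta(p)$, $1+W_\delta \mapsto h_\delta(1+p)$, $1+V_\alpha \mapsto x_\alpha(1)$.

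The only genuine technical point is the identification $\Lambda(\mathbb{Z}_p^d) \cong \mathbb{Z}_p[[T_1,\dots,T_d]]$, which requires Mahler's theorem and a careful handling of the $\mathfrak{m}$-adic versus product topology on the power series ring; however this is precisely \cite[Prop. 20.1]{Lie} which we are citing, so the role of the present lemma is only to package that classical statement with the concrete choice of topological generators furnished by Theorem \ref{eq:orderedbasisiwahori}.
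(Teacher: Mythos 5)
Your proposal is correct and follows essentially the same route as the paper: the ordered basis of Theorem \ref{eq:orderedbasisiwahori} gives the homeomorphism $c:\mathbb{Z}_p^d\to G$, one dualizes $c^{*}$ on continuous functions via $\Lambda(G)=\mathrm{Hom}_{\mathbb{Z}_p}(C(G),\mathbb{Z}_p)$ (Lemma 22.1 of \cite{Lie}), and then one cites Prop.\ 20.1 of \cite{Lie} for $\Lambda(\mathbb{Z}_p^d)\cong\mathbb{Z}_p[[T_1,\dots,T_d]]$ with $1+T_i$ corresponding to $g_i$. Your Mahler-basis discussion merely unpacks the cited proposition, so there is no substantive difference from the paper's argument.
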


We note that for obvious reasons the isomorphism above is not a ring isomorphism. Now, let $b_i:=g_i-1$ and $b^m:=b_1^{m_1}\cdots b_d^{m_d}$ for any multi-index $m=(m_1,...,m_d)\in \mathbb{N}^d$. 

\begin{definition}
	We can define a valuation on $\Lambda(G)$
	\begin{center}
	$\tilde{\omega}:=\tilde{\omega}_{\frac{1}{n}}:\Lambda(G)\backslash \{0\}\rightarrow [0,\infty)$
\end{center}
by
\begin{center}
	$\tilde{\omega}\Big(\sum_{m \in \mathbb{N}^d} c_mb^m\Big):=\inf_{m \in \mathbb{N}^d} \Big(val_p(c_m)+\sum_{i=1}^dm_i\tilde{\omega}(g_i)\Big)$
\end{center}
where $c_m \in \mathbb{Z}_p,$ with the convention that $\tilde{\omega}(0):=\infty$. 
	\end{definition}
	The valuation $\tilde{\omega}$ is the natural valuation on $\Lambda(G)$ (cf. \cite[Section 28]{Lie}).

\subsection{Relations}\label{sub:relationsinIwahori}  

The isomorphism $\tilde{c}$ of lemma \ref{ctilde}, provides an identification of the variables $V_{\alpha}, W_{\delta}, U_{\beta}$ as the elements of the Iwasawa algebra of $G$. Our objective is to find the relations among the above variables. For this we  use the Chevalley relations \cite{Yale} and sometimes also direct computation.

\begin{lemma}\label{relations}
	In the Iwasawa algebra $\Lambda(G)$, the variables $V_{\alpha}, W_{\delta}, U_{\beta}$ satisfy the following relations.
	{
	\begin{align}
	(1+W_{\delta})(1+U_{\beta})&=(1+U_{\beta})^q(1+W_{\delta}),(\beta \in \Phi^-), q = (1+p)^{\langle \beta, \delta \rangle}\label{eq:firstIwahori}\\
	(1+W_{\delta})(1+V_{\alpha})&=(1+V_{\alpha})^{q^{'}}(1+W_{\delta}),(\alpha \in \Phi^+),  q^{'} = (1+p)^{\langle \alpha, \delta \rangle}\label{eq:secondIwahori}\\
	V_{\alpha}U_{\beta}&=U_{\beta}V_{\alpha},(\alpha \in \Phi^+,\beta \in \Phi^-, \alpha \neq -\beta, \alpha+\beta \notin \Phi)\label{eq:thirdIwahori}\\
	(1+V_{\alpha})(1+U_{\beta})&=(1+V_{(i,k)})^p(1+U_{\beta})(1+V_{\alpha}),i<k,(\alpha =(i,j)\in \Phi^+,\beta=(j,k) \in \Phi^-) \label{eq:fiveiwahori}\\
	(1+V_{\alpha})(1+U_{\beta})&=(1+U_{(i,k)})(1+U_{\beta})(1+V_{\alpha}),i>k,(\alpha =(i,j)\in \Phi^+,\beta=(j,k) \in \Phi^-)\label{eq:sixIwahori}\\
	(1+V_{\alpha})(1+U_{\beta})&=(1+V_{(k,j)})^{-p}(1+U_{\beta})(1+V_{\alpha}),k<j,(\alpha =(i,j)\in \Phi^+,\beta=(k,i) \in \Phi^-)\label{eq:sevenIwahori}\\
	(1+V_{\alpha})(1+U_{\beta})&=(1+U_{(k,j)})^{-1}(1+U_{\beta})(1+V_{\alpha}),k>j,(\alpha=(i,j) \in \Phi^+,\beta=(k,i) \in \Phi^-) \label{eightIwahori}\\
	(1+V_{\alpha})(1+U_{-\alpha})&=(1+U_{-\alpha})^{(1+p)^{-1}}(1+W_{(i,i+1)})\cdots (1+W_{(j-1,j)})(1+V_{\alpha})^{(1+p)^{-1}},(\alpha=(i,j) \in \Phi^+) \label{eq:fourIwahori}\\
	U_{\beta_1}U_{\beta_2}&=U_{\beta_2}U_{\beta_1},(\beta_1,\beta_2 \in \Phi^-, \beta_1+\beta_2 \notin \Phi) \label{eq:nineIwahori}\\
	(1+U_{\beta_1})(1+U_{\beta_2})&=(1+U_{(i,k)})^p(1+U_{\beta_2})(1+U_{\beta_1}),(\beta_1=(i,j),\beta_2=(j,k) \in \Phi^-)\label{eq:tenIwahori}\\
	(1+U_{\beta_1})(1+U_{\beta_2})&=(1+U_{(k,j)})^{-p}(1+U_{\beta_2})(1+U_{\beta_1}),(\beta_1=(i,j),\beta_2=(k,i) \in \Phi^-) \label{eq:elevenIwahori}\\
	W_{\delta_1}W_{\delta_2}&=W_{\delta_2}W_{\delta_1},(\delta_1,\delta_2 \in \Pi, \delta_1 \neq \delta_2)\label{eq:twelveIwahori}\\
	V_{\alpha_1}V_{\alpha_2}&=V_{\alpha_2}V_{\alpha_1},(\alpha_1,\alpha_2 \in \Phi^+, \alpha_1+\alpha_2 \notin \Phi)\label{eq:thirteenIwahori}\\
	(1+V_{\alpha_1})(1+V_{\alpha_2})&=(1+V_{(i,k)})(1+V_{\alpha_2})(1+V_{\alpha_1}),(\alpha_1=(i,j),\alpha_2=(j,k) \in \Phi^+)\label{eq:fourteenIwahori}\\
	(1+V_{\alpha_2})(1+V_{\alpha_1})&=(1+V_{(k,j)})(1+V_{\alpha_1})(1+V_{\alpha_2}),(\alpha_1=(i,j),\alpha_2=(k,i) \in \Phi^+) \label{eq:fifteenIwahori}.
	\end{align}
}
\end{lemma}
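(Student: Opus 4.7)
The plan is to prove each relation as a group-theoretic identity inside $G \subseteq SL_n(\mathbb{Z}_p)$ among the elements $x_\beta(p)$, $h_\delta(1+p)$ and $x_\alpha(1)$, and then transport it to $\Lambda(G)$ via the isomorphism $\tilde{c}$ of Lemma \ref{ctilde}. For exponents $q \in \mathbb{Z}_p$ that are not integers (as in \eqref{eq:firstIwahori}, \eqref{eq:secondIwahori}, \eqref{eq:fourIwahori}), I interpret $(1+U_\beta)^q$ by the binomial series $\sum_{k\geq 0}\binom{q}{k}U_\beta^k$, which converges in $\Lambda(G)$ because $U_\beta$ lies in the maximal ideal; the point is that under $\tilde{c}$ this agrees with $x_\beta(pq)$, since $t\mapsto x_\beta(t)$ is a continuous $\mathbb{Z}_p$-homomorphism from $(\mathbb{Z}_p,+)$ to $G$.

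Relations \eqref{eq:firstIwahori} and \eqref{eq:secondIwahori} follow from the standard torus-conjugation formula $h_\delta(\lambda)\, x_\varsigma(t)\, h_\delta(\lambda)^{-1} = x_\varsigma\bigl(\lambda^{\langle\varsigma,\delta^\vee\rangle} t\bigr)$ in the pinned Chevalley group $SL_n$; applying it with $\lambda=1+p$ and $t=p$ (resp.\ $t=1$) and then using $x_\varsigma(s)^m = x_\varsigma(ms)$ (extended to $m\in \mathbb{Z}_p$ by continuity) directly yields the stated identities.

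Relations \eqref{eq:thirdIwahori}--\eqref{eightIwahori} and \eqref{eq:nineIwahori}--\eqref{eq:fifteenIwahori} are instances of the Chevalley commutator formula $[x_\alpha(s),x_\gamma(t)] = x_{\alpha+\gamma}(c_{\alpha,\gamma} s t)$ when $\alpha+\gamma\in \Phi$, and the trivial relation when $\alpha+\gamma \notin \Phi\cup\{0\}$. Because the root system of $SL_n$ is of type $A_{n-1}$, the structure constants $c_{\alpha,\gamma}$ are $\pm 1$, so each case can be verified by a direct elementary-matrix computation: write $\alpha=(i,j)$, $\beta=(k,l)$, expand products of $I+sE_{ij}$, and use $E_{ij}E_{kl}=\delta_{jk}E_{il}$. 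The several sub-cases (same first index, same second index, crossed overlap, no overlap, resulting root positive vs.\ negative) correspond to the different lines \eqref{eq:fiveiwahori}--\eqref{eightIwahori} and \eqref{eq:tenIwahori}--\eqref{eq:fifteenIwahori}; in each one, after computing $x_\alpha x_\beta$ I permute $x_{\alpha+\beta}$ past the remaining factors (this is possible because $x_{\alpha+\beta}$ commutes with $x_\alpha$ and $x_\beta$ whenever the pairwise root sums are not roots, which is exactly the case here). The ordering of factors on the right-hand sides is dictated by the ordered basis from Theorem \ref{eq:orderedbasisiwahori}.

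The main obstacle is relation \eqref{eq:fourIwahori}, corresponding to $\alpha+(-\alpha)=0$, since it produces a torus element rather than another root-subgroup element. I would reduce to the rank-one $SL_2$ subgroup spanned by $\{x_\alpha,x_{-\alpha},h_\alpha\}$ (for $\alpha=(i,j)$ this is the standard $SL_2$ embedded in the rows/columns $i,j$) and compute $x_\alpha(1)\,x_{-\alpha}(p)$ directly as a $2\times 2$ matrix; its Iwahori triangular factorisation $N^-TN^+$ yields $x_{-\alpha}\!\bigl(p(1+p)^{-1}\bigr)\, h_\alpha(1+p)\, x_\alpha\!\bigl((1+p)^{-1}\bigr)$. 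Finally I would rewrite $h_\alpha(1+p)$ as a product of the simple torus elements: with $\alpha=\delta_i+\delta_{i+1}+\cdots+\delta_{j-1}$ (where $\delta_k=(k,k+1)$), the identity $h_\alpha(\lambda)=h_{\delta_i}(\lambda)h_{\delta_{i+1}}(\lambda)\cdots h_{\delta_{j-1}}(\lambda)$ in the diagonal torus of $SL_n$ (an immediate check, since each side equals $\mathrm{diag}(1,\dots,1,\lambda,1,\dots,1,\lambda^{-1},1,\dots,1)$ with $\lambda$ in position $i$ and $\lambda^{-1}$ in position $j$) gives exactly $(1+W_{(i,i+1)})\cdots(1+W_{(j-1,j)})$. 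Transporting the resulting identity through $\tilde{c}$ yields \eqref{eq:fourIwahori}. The detailed bookkeeping of signs and orderings in these matrix computations is precisely the content of the appendix section \ref{appendixCrelations}.
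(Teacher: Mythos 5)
Your proposal is correct and follows essentially the same route as the paper: both derive each identity in the group $G$ from Steinberg's torus-conjugation relation and the Chevalley commutator formula (with structure constants $\pm1$ in type $A_{n-1}$, checked by elementary-matrix computations), handle the $\alpha=-\beta$ case by the explicit $SL_2$ triangular factorisation $x_\alpha(1)x_{-\alpha}(p)=x_{-\alpha}(p(1+p)^{-1})\,h_\alpha(1+p)\,x_\alpha((1+p)^{-1})$ with $h_\alpha(1+p)=h_{(i,i+1)}(1+p)\cdots h_{(j-1,j)}(1+p)$, and then transport everything to $\Lambda(G)$ via $\tilde c$, interpreting $\mathbb{Z}_p$-exponents by continuity. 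Your explicit remark that $(1+U_\beta)^q$ corresponds to $x_\beta(pq)$ is a point the paper leaves implicit, but it is the same argument.
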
 

\begin{proof}
	We use the Steinberg's relations in the group $G$ and then translate them in $\mathcal{A}$ in order to deduce the relations in lemma \ref{relations}. For example, recalling the notations $h_{\delta}(1+p)$ and $x_{\beta}(p)$ in section \ref{sub:ordiwa}, Steinberg \cite{Yale} gives
	
	\begin{center}
		$h_{\delta}(1+p)x_{\beta}(p)h_{\delta}(1+p)^{-1}=x_{\beta}((1+p)^{\langle \beta , \delta \rangle }p)$, $(\beta\in \Phi^-,\delta \in \Pi)$,
	\end{center}
	where $\langle \beta , \delta \rangle \in \mathbb{Z}$ (cf. p. $30$ of \cite{Yale}).
	So the corresponding relation in the Iwasawa algebra is 
	\begin{equation}
	(1+W_{\delta})(1+U_{\beta})=(1+U_{\beta})^q(1+W_{\delta}),(\beta \in \Phi^-),
	\end{equation}
	where $q = (1+p)^{\langle \beta, \delta \rangle}$. This is relation \ref{eq:firstIwahori}\\
	
	By similar means we compute the other relations of lemma \ref{relations}. One can find the computation in section \ref{appendixCrelations}.
\end{proof}

We consider $\mathcal{A}$ - the universal  $p$-adic algebra  of non-commutative power series in the variables $V_{\alpha}$, $W_{\delta}$, and $U_{\beta}$ where $\alpha$ varies over the positive roots, $\delta$ varies over the simple roots and $\beta$ varies over the negative roots. We denote $\mathcal{A}=\mathbb{Z}_p\{\{V_{\alpha},W_{\delta},U_{\beta}, \alpha \in \Phi^+,\delta \in \Pi, \beta \in \Phi^-\}\}$. Thus, it 
is composed of all non-commutative series 
\begin{center}
	$f=\sum_{k\geqslant 0}\sum_ia_ix^i$
\end{center}
where $a_i \in \mathbb{Z}_p$ and, for all $k \geqslant 0$, $i$ runs over all maps
$\{1,2,...,k\}\rightarrow \{1,2,...,d\}$, ($d=|\Phi|+|\Pi|$); we set $x_1=U_{(n,1)},x_2=U_{(n-1,1)},x_3=U_{(n,2)},\ldots , x_{\frac{n(n-1)}{2}+1}=W_{(1,2)},\ldots, x_{\frac{n^2+n-2}{2}}=W_{(n-1,n)}, x_{\frac{n^2+n}{2}}=V_{(n-1,n)},\ldots, x_{d}=V_{(1,2)}$, i.e. $x_i$'s (with $i$ increasing) are assigned to the variables among $\{V_{\alpha},W_{\delta},U_{\beta}\}$ corresponding to the order as prescribed in Theorem \ref{eq:orderedbasisiwahori}. We set $x^i=x_{i_1}x_{i_2}\cdots x_{i_k}$.

Let $\mathcal{R}\subset \mathcal{A}$ be the closed (two-sided) ideal generated by the relations 
(\ref{eq:firstIwahori}-\ref{eq:fifteenIwahori}) and $\overline{\mathcal{A}}$ be the reduction modulo $p$ of $\mathcal{A}$. After lemma \cite[lemma 1.3]{Laurent}, we obtain
\begin{corollary}\label{eq:lemmaunimp}
	Let $\overline{\mathcal{R}}$ be the image of $\mathcal{R}$ in $\overline{\mathcal{A}}$. Then $\overline{\mathcal{R}}$ is the closed two-sided ideal in $\overline{\mathcal{A}}$ generated by the  relations (\ref{eq:firstIwahori}-\ref{eq:fifteenIwahori}).
\end{corollary}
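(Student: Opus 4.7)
\bigskip

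\noindent\textbf{Proof proposal for Corollary \ref{eq:lemmaunimp}.}

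The plan is to apply the general principle (essentially \cite[Lemma 1.3]{Laurent}) that in a non-commutative power series ring $\mathcal{A}$ over $\Z_p$, reduction modulo $p$ commutes with the operation of taking the closed two-sided ideal generated by a prescribed set of elements. Let me verify that the hypotheses are met and that the two inclusions go through.

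First I would set up notation: let $\mathcal{J} \subseteq \overline{\mathcal{A}}$ denote the closed two-sided ideal generated by the (images of the) relations (\ref{eq:firstIwahori}--\ref{eq:fifteenIwahori}), and write $\pi \colon \mathcal{A} \to \overline{\mathcal{A}} = \mathcal{A}/p\mathcal{A}$ for the reduction map. The inclusion $\mathcal{J} \subseteq \overline{\mathcal{R}}$ is the easy direction: each of the relations is an element of $\mathcal{R}$, hence its image lies in $\pi(\mathcal{R}) = \overline{\mathcal{R}}$, and $\overline{\mathcal{R}}$ is closed (being the image of a closed ideal under the continuous surjection $\pi$ between compact topological rings). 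Therefore the closed ideal $\mathcal{J}$ generated by these images is contained in $\overline{\mathcal{R}}$.

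For the reverse inclusion $\overline{\mathcal{R}} \subseteq \mathcal{J}$, I would argue as follows. Any $r \in \mathcal{R}$ is, by definition of the closed ideal generated by a set, a limit (in the natural topology of $\mathcal{A}$, which is finer than or equal to the topology induced by the valuation $\tilde{\omega}$ extended to $\mathcal{A}$) of finite sums of the form $\sum_j a_j \rho_{i_j} b_j$, where $\rho_{i_j}$ is one of the fifteen relations (\ref{eq:firstIwahori}--\ref{eq:fifteenIwahori}) and $a_j, b_j \in \mathcal{A}$. Applying the continuous map $\pi$ and using that $\pi$ is multiplicative, the image $\overline{r}$ is a limit of elements $\sum_j \overline{a_j}\, \overline{\rho_{i_j}}\, \overline{b_j}$, each of which lies in $\mathcal{J}$. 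Since $\mathcal{J}$ is closed by construction, $\overline{r} \in \mathcal{J}$.

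The only subtle point, and what I expect to be the main technical obstacle, is to ensure that the topology on $\mathcal{A}$ used to define the closure of the ideal $\mathcal{R}$ is compatible with the quotient topology on $\overline{\mathcal{A}} = \mathcal{A}/p\mathcal{A}$, so that $\pi$ is both continuous and \emph{closed} on ideals (so that $\pi(\overline{\mathcal{R}})$ indeed equals the topological closure of the submodule generated by the reductions of the relations, rather than merely containing it). This is precisely the content of \cite[Lemma 1.3]{Laurent}: in the non-commutative power series setting, the reduction map $\pi$ sends closed ideals to closed ideals and commutes with the ``closed ideal generated by'' operator. With that lemma in hand the conclusion is immediate, so I would simply invoke it after recording the two set-theoretic inclusions as above.
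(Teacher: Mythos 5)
Your proposal is correct and follows essentially the same route as the paper, which simply deduces the corollary from \cite[Lemma 1.3]{Laurent}; your two set-theoretic inclusions (using that $\overline{\mathcal{R}}$ is closed as the continuous image of the compact ideal $\mathcal{R}$, and that reductions of limits of finite sums $\sum_j a_j\rho_{i_j}b_j$ land in the closed ideal $\mathcal{J}$) just spell out what that citation encapsulates.
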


\section{Presentation of the Iwasawa algebra $\Lambda(G)$ for $p>n+1$}
Our goal in this section is to give an explicit presentation of $\Lambda(G)$. The strategy of the proof is to show the corresponding statement for 
\begin{center}
	$\Omega_G:=\Lambda_G \otimes_{\mathbb{Z}_p}\mathbb{F}_p$,
\end{center}
which is the Iwasawa algebra with finite coefficients. We show in Theorem \ref{eq:secondimptheoremIwahori} that for $p>n+1$, the Iwasawa algebra $\Omega_G$ is naturally isomorphic to $\overline{\mathcal{A}}/\overline{\mathcal{R}}$. In section \ref{sub:conti}, we construct a continuous surjective map $\varphi:\mathcal{A} \rightarrow \Lambda(G)$. Thus, we get a natural continuous surjection $\mathcal{B}:=\mathcal{A}/\mathcal{R} \rightarrow \Lambda(G)$. Therefore, by lemma $\ref{eq:lemmaunimp}$,  we obtain a surjection $\overline{\varphi}:\overline{\mathcal{B}}:=\overline{\mathcal{A}}/\overline{\mathcal{R}} \rightarrow \Omega_G$. In section $\ref{sub:longlemmaiwahori}$, using the natural grading on $\overline{\mathcal{B}}$, we show that $\dim $ gr$^m\overline{\mathcal{B}} \leqslant \dim_{\mathbb{F}_p} \text{gr}^m\Omega_G$ (cf. Proposition $\ref{eq:longlemma}$). Finally, in section $\ref{sub:maintheoremsiwahori}$, we give the proof of an explicit presentation of $\Lambda(G)$ and $\Omega_G$ (cf. Theorems $\ref{eq:secondimptheoremIwahori}$ and $\ref{eq:maintheoremIwahori}$). Later in corollary \ref{eq:theoremgln} we extend our result to obtain an explicit presentation of the Iwasawa algebra for the pro-$p$ Iwahori of $GL(n,\mathbb{Z}_p)$.\\

\subsection{Iwasawa algebra with finite coefficients}\label{sub:conti}
We first construct a natural surjective, continuous map $\varphi: \mathcal{A} \rightarrow \Lambda(G)$.

The non-commutative polynomial algebra $$A:=\mathbb{Z}_p\{x_1,...,x_d\}$$   is a dense subalgebra of $\mathcal{A}$.
\begin{lemma}
	Let us define a natural map $\varphi:A \rightarrow \Lambda(G)$ mapping $x_i \in {A}$ (with $i$ increasing) to the corresponding variable among $\{V_{\alpha},W_{\delta},U_{\beta}\}$ in the Iwasawa algebra $\Lambda(G)$, according to the order as prescribed in Theorem \ref{eq:orderedbasisiwahori}. Then, this map extends continuously to a surjective homomorphism $\mathcal{A}\rightarrow \Lambda(G)$.
\end{lemma}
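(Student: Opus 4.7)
The plan is first to define $\varphi$ on the dense free subalgebra $A=\mathbb{Z}_p\{x_1,\ldots,x_d\}$ by the universal property of free non-commutative $\mathbb{Z}_p$-algebras. Since $A$ is free on the generators $x_1,\ldots,x_d$, specifying the image $\varphi(x_i)$ to be the corresponding element among $\{V_{\alpha}=x_{\alpha}(1)-1,\ W_{\delta}=h_{\delta}(1+p)-1,\ U_{\beta}=x_{\beta}(p)-1\}$ in $\Lambda(G)$ (well-defined as an element of $\Lambda(G)$ by Lemma \ref{ctilde}) uniquely determines a $\mathbb{Z}_p$-algebra homomorphism $\varphi:A\to\Lambda(G)$.

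Next, I would verify continuity. The topology on $\mathcal{A}$ is that of the $\mathfrak{m}$-adic completion of $A$ for $\mathfrak{m}=(p,x_1,\ldots,x_d)$, while $\Lambda(G)$ carries the topology defined by the valuation $\tilde{\omega}$. From equations (\ref{2.2.1})--(\ref{2.2.3}) together with the hypothesis $p>n+1$, each generator of $A$ maps to an element of positive $\tilde{\omega}$-value: one checks $\tilde{\omega}(\varphi(x_i))\geq 1/n$ for every $i$, and $\tilde{\omega}(p)=1$. Since $\tilde{\omega}$ is a valuation on $\Lambda(G)$, every element of $\mathfrak{m}^N$ maps into $\{f\in\Lambda(G):\tilde{\omega}(f)\geq N/n\}$, which establishes continuity. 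Because $\mathcal{A}$ is the completion of $A$ for this topology while $\Lambda(G)$ is complete for $\tilde{\omega}$, the homomorphism $\varphi$ extends uniquely to a continuous $\mathbb{Z}_p$-algebra homomorphism $\bar\varphi:\mathcal{A}\to\Lambda(G)$.

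For surjectivity I would invoke Lemma \ref{ctilde}: the $\mathbb{Z}_p$-module isomorphism $\tilde{c}$ shows that the ordered monomials in the $V_{\alpha},W_{\delta},U_{\beta}$ (in the order from Theorem \ref{eq:orderedbasisiwahori}) form a topological $\mathbb{Z}_p$-module basis of $\Lambda(G)$. Each such ordered monomial is the $\varphi$-image of the corresponding monomial in the $x_i$'s, so the image $\bar\varphi(\mathcal{A})$ is a closed $\mathbb{Z}_p$-submodule of $\Lambda(G)$ (closed because $\mathcal{A}$ is compact by Tychonoff and $\Lambda(G)$ is Hausdorff) containing a dense $\mathbb{Z}_p$-span, hence equal to $\Lambda(G)$. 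The main technical point throughout is the continuity step: formal series in the $x_i$'s must converge to genuine elements of $\Lambda(G)$, and this rests on the uniform positive lower bound on the $\tilde{\omega}$-values of the generators supplied by Theorem \ref{eq:orderedbasisiwahori} under the hypothesis $p>n+1$.
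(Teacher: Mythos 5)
Your proposal is correct and follows essentially the same route as the paper: continuity comes from the lower bound $\tilde{\omega}(\varphi(x_i))\geq \tfrac{1}{n}$ on the generators (equations (\ref{2.2.1})--(\ref{2.2.3})) together with the additivity of the valuation, giving $\tilde\omega(\varphi(F))\geq \tfrac{1}{n}v_{\mathcal{A}}(F)$, and then one extends to the completion and deduces surjectivity from Lemma \ref{ctilde}. Your surjectivity step (closed image by compactness plus density of the span of ordered monomials) is a slightly more roundabout phrasing of the paper's observation that every element of $\Lambda(G)$ is already the image of a well-ordered power series, but it is the same underlying idea and is correct.
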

\begin{proof}
	It suffices to show that if a sequence in $\mathcal{A}$ converges to $0$ in the topology of $\mathcal{A}$, the image converges to $0$ in $\Lambda(G)$. \\
	The topology of $\mathcal{A}$ is given by the valuation $v_{\mathcal{A}}$; we write 
	\begin{center}
		$F=\sum a_ix^i$\\
		$v_{\mathcal{A}}(F)=\inf_{i}(val_p(a_i)+|i|)$.
	\end{center}
	Lazard (cf. $2.3$, chap.$3$ of \cite{Lazard}) shows that $\omega$ is an additive valuation, $\omega(\eta\nu)=\omega(\nu)+\omega(\eta)$ for $\nu,\eta \in \Lambda(G)$. For $F \in \mathcal{A}$, with image $\mu \in \Lambda(G)$, we have 
	\begin{align*}
	\omega(\mu)&=\omega(\sum a_ix^i)\\
	&\geqslant \inf_i\{val_p(a_i)+\sum_{i=1}^dm_i\omega(g_i)\} \ (\text{for} \ \text{some} \ m_i \in \mathbb{N})\\
	&\geqslant \frac{1}{n}v_{\mathcal{A}}(F).
	\end{align*}
	
	The third inequality follows because\\ $\omega(W_{(i^{'},i^{'}+1)})=1>\frac{1}{n}$, 
	$\omega(U_{(i,j)})=\frac{n-i+j}{n}\geqslant \frac{1}{n}$ and 
	$\omega(V_{(i^{''},j^{''})})=\frac{j^{''}-i^{''}}{n}\geqslant\frac{1}{n}$,
	where $(i,j)\in \Phi^-,i^{'}=1,...,n-1,(i^{''},j^{''})\in \Phi^+$. \\
	
	Hence, the map $\varphi:\mathcal{A}\rightarrow \Lambda(G)$ is continuous. 
	
	The surjectivity follows from  the fact that $\varphi$ is already surjective if $\mathcal{A}$ is replaced by the set of linear combinations of well-ordered monomials ($i$ increasing).
	
\end{proof}

The Iwasawa algebra $\Lambda(G)$ is filtered by $\frac{1}{n}\mathbb{N}$. The filtration of $\Lambda(G)$ defines a filtration on  $\Omega_G:=\Lambda_G \otimes_{\mathbb{Z}_p}\mathbb{F}_p$ given by $F^\upsilon\Omega_G=F^\upsilon\Lambda_G\otimes \mathbb{F}_p$. Reduction modulo $p$ gives is a natural map $\overline{\mathcal{A}}\rightarrow \Omega_G$ whose kernel contains $\overline{\mathcal{R}}$.\\

 As an $\mathbb{F}_p$-vector space, gr$^v\Lambda_G$ is generated by the independent elements 
\begin{center}
	$p^d\prod_{(i,j)\in \Phi^-}U_{(i,j)}^{p_{ij}}\prod_{i^{'}=1}^{n-1}W_{(i^{'},i^{'}+1)}^{m_{i^{'},i^{'}+1}}\prod_{(i^{''},j^{''})\in \Phi^+}V_{(i^{''},j^{''})}^{n_{i^{''},j^{''}}}$ such that $d+\sum_{(i,j)\in \Phi^-}[\frac{j-i}{n}+1]p_{ij}+\sum_{i^{'}=1}^{n-1}{m_{i^{'},i^{'}+1}}+\sum_{(i^{''},j^{''})\in \Phi^+}(\frac{j^{''}-i^{''}}{n}){n_{i^{''},j^{''}}}=v$.
\end{center}
(Cf. p.$199$ of \cite{Lie}). Then  gr$^v\Omega_G$ is generated by the elements
\begin{center}
	$\prod_{(i,j)\in \Phi^-}U_{(i,j)}^{p_{ij}}\prod_{i^{'}=1}^{n-1}W_{(i^{'},i^{'}+1)}^{m_{i^{'},i^{'}+1}}\prod_{(i^{''},j^{''})\in \Phi^+}V_{(i^{''},j^{''})}^{n_{i^{''},j^{''}}}$ such that $\sum_{(i,j)\in \Phi^-}[\frac{j-i}{n}+1]p_{ij}+\sum_{i^{'}=1}^{n-1}{m_{i^{'},i^{'}+1}}+\sum_{(i^{''},j^{''})\in \Phi^+}(\frac{j^{''}-i^{''}}{n}){n_{i^{''},j^{''}}}=v$,
\end{center}
and these elements are linearly independent. We do not change the filtration replacing $v \in \frac{1}{n}\mathbb{N}$ by $m=nv \in \mathbb{N}$, the valuations of the variables $(U_{(i,j)},W_{(i^{'},i^{'}+1)},V_{(i^{''},j^{''})})$ being now $(n-i+j,n,j^{''}-i^{''})$ where $(i,j)\in \Phi^-,i^{'}=1,...,n-1,(i^{''},j^{''})\in \Phi^+$. \\

In particular we have for $m \in \mathbb{N}:$ 
\begin{thm}\label{eq:bird}
	(Lazard) The dimension $d_m$ of gr$^m\Omega_G$ over $\mathbb{F}_p$ is equal to the dimension of the space of homogeneous symmetric polynomials of degree $m$ in the variables $\{U_{(i,j)},W_{(i^{'},i^{'}+1)},V_{(i^{''},j^{''})}\}$ having degrees corresponding to their valuations $(n-i+j,n,j^{''}-i^{''})$ where $(i,j)\in \Phi^-,i^{'}\in [1,n-1],(i^{''},j^{''})\in \Phi^+$. 
\end{thm}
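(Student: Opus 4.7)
The plan is to observe that this theorem is essentially a direct consequence of Lazard's structure theorem for Iwasawa algebras of $p$-valued groups, combined with the ordered basis already constructed in Theorem \ref{eq:orderedbasisiwahori}. Recall from that theorem the ordered tuple $g_1,\ldots,g_d$ where each $g_i$ is one of $x_\beta(p),\,h_\delta(1+p),\,x_\alpha(1)$, and set $b_i:=g_i-1\in\Lambda(G)$. By \cite[III.2.3]{Lazard}, applicable here because the hypothesis $p>n+1$ of \eqref{eq:conditioniwahoi} guarantees $(G,\omega)$ is $p$-saturated, every $\xi\in\Lambda(G)$ admits a unique convergent expansion $\xi=\sum_{m\in\N^d}c_m b^m$ with $c_m\in\Z_p$, and the natural valuation satisfies $\tilde\omega(\xi)=\inf_m\bigl(\mathrm{val}_p(c_m)+\sum_i m_i\omega(g_i)\bigr)$.

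First, I would deduce from this uniqueness statement that the principal symbols of the $p^s b^m$ with $s+\sum_i m_i\omega(g_i)=v$ constitute an $\F_p$-basis of $\mathrm{gr}^v\Lambda(G)$, exactly as in the displayed paragraph immediately preceding the theorem. Next, since $F^v\Omega_G=F^v\Lambda(G)\otimes_{\Z_p}\F_p$, the natural surjection $\mathrm{gr}^v\Lambda(G)\to\mathrm{gr}^v\Omega_G$ has kernel generated by the symbols with $s\ge 1$; hence $\mathrm{gr}^v\Omega_G$ acquires an $\F_p$-basis given precisely by those $b^m$ with $\sum_i m_i\omega(g_i)=v$ \textbf{exactly}.

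Substituting the valuations computed in \eqref{2.2.1}--\eqref{2.2.3} for the variables $\{U_{(i,j)},W_{(i',i'+1)},V_{(i'',j'')}\}$ (namely $\tfrac{n-i+j}{n},\,1,\,\tfrac{j''-i''}{n}$ respectively), and rescaling from $v\in\tfrac{1}{n}\N$ to the integer index $m=nv$, converts these weights into the integer triple $(n-i+j,\,n,\,j''-i'')$ appearing in the statement. The cardinality of the set of such weighted monomials of total degree $m$ is then, by definition, the $\F_p$-dimension of the degree-$m$ homogeneous component of the weighted polynomial ring on these commuting variables, which is the claimed $d_m$.

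The main (and only) obstacle is not conceptual: it amounts to carefully reconciling the fractional $p$-valuation $\omega$ on $G$ with the integer grading used on the right-hand side of the theorem, and verifying that the principal-symbol map is injective on each weighted-degree stratum. Both points are immediate consequences of the uniqueness of the Lazard expansion $\xi=\sum c_m b^m$, so essentially no computation beyond rewriting the displayed paragraph is needed.
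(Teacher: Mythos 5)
Your proposal is correct and follows essentially the same route as the paper: the paper likewise obtains the $\mathbb{F}_p$-basis of gr$^v\Lambda_G$ and gr$^v\Omega_G$ from the Lazard/Schneider expansion $\xi=\sum_m c_m b^m$ in the ordered basis of Theorem \ref{eq:orderedbasisiwahori} (citing \cite[p.~199]{Lie}), kills the symbol of $p$ to pass to $\Omega_G$, and rescales the fractional valuations $\bigl(\tfrac{n-i+j}{n},1,\tfrac{j''-i''}{n}\bigr)$ by $n$ to get the integer weights $(n-i+j,n,j''-i'')$. Your identification of $d_m$ with the dimension of the weighted degree-$m$ component of the commutative polynomial ring in these variables is exactly the paper's reading of "homogeneous symmetric polynomials".
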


We must now consider on $\overline{\mathcal{A}}$, the filtration of $\overline{\mathcal{A}}$ obtained by assigning  the degrees $(n-i+j,n,j^{''}-i^{''})$ to the formal variables 
$(U_{(i,j)},W_{(i^{'},i^{'}+1)},V_{(i^{''},j^{''})})$. Then gr$^m \overline{\mathcal{A}}$ is isomorphic to the space of non-commutative polynomials of degree $m$. We endow $\overline{\mathcal{B}}=\overline{\mathcal{A}}/\overline{\mathcal{R}}$ with the induced filtration, so 
\begin{center}
	gr$^m\overline{\mathcal{B}}=Fil^m\overline{\mathcal{A}}/(Fil^{m+1}\overline{\mathcal{A}}+Fil^m\overline{\mathcal{A}}\cap \overline{\mathcal{R}})$.
\end{center}
By construction the map gr$^m\overline{\mathcal{B}} \rightarrow $ gr$^m\Omega_G$ is surjective. 

\subsection{Bound on the dimension of the graded pieces of $\mathcal{B}$ }\label{sub:longlemmaiwahori}
The following proposition gives an upper bound for $\dim $ gr$^m\overline{\mathcal{B}}$ generalizing the case for the pro-$p$ Iwahori of $SL(2,\mathbb{Z}_p)$ by Clozel (\cite[lemma 2.11]{Clozel2}).
\begin{proposition}
	\label{eq:longlemma}
	For $m\geqslant 0$, we have $\dim $ gr$^m\overline{\mathcal{B}} \leqslant d_m=\dim_{\mathbb{F}_p} \text{gr}^m\Omega_G $.
\end{proposition}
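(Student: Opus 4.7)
The plan is to show that $\mathrm{gr}^m\overline{\mathcal{B}}$ is spanned by the images of ordered monomials of weighted degree $m$ (in the Lazard order of Theorem \ref{eq:orderedbasisiwahori}); since the number of such ordered monomials equals exactly $d_m$ by Theorem \ref{eq:bird}, the desired bound follows immediately.

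The key step is to extract, for each pair $X, Y$ of distinct generators among $\{U_\beta, W_\delta, V_\alpha\}$, the leading (lowest weighted-degree) part of the corresponding relation modulo $p$. I expect to establish that
\[
XY - YX \equiv c_{X,Y}\, Z_{X,Y} \pmod{\overline{\mathcal{R}} + \mathrm{Fil}^{\deg(X)+\deg(Y)+1}\overline{\mathcal{A}}},
\]
where $c_{X,Y} \in \mathbb{F}_p$ and $Z_{X,Y}$ is either zero or a single generator of weighted degree exactly $\deg(X) + \deg(Y)$. Granted these congruences, the $\mathbb{F}_p$-span of the generators of $\mathrm{gr}\,\overline{\mathcal{B}}$ acquires the structure of a graded Lie algebra $\mathfrak{g}$, and the relations $[X, Y] = c_{X,Y}\, Z_{X,Y}$ induce a surjection of graded $\mathbb{F}_p$-algebras $U(\mathfrak{g}) \twoheadrightarrow \mathrm{gr}\,\overline{\mathcal{B}}$ from the universal enveloping algebra. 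By the PBW theorem, $\dim_{\mathbb{F}_p} U(\mathfrak{g})_m = \dim_{\mathbb{F}_p} S(\mathfrak{g})_m = d_m$, yielding the bound $\dim_{\mathbb{F}_p}\mathrm{gr}^m\overline{\mathcal{B}} \leq d_m$.

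The congruences are verified case by case across the fifteen relations. The straightforwardly commuting relations (\ref{eq:thirdIwahori}), (\ref{eq:nineIwahori}), (\ref{eq:twelveIwahori}), (\ref{eq:thirteenIwahori}) give $Z_{X,Y} = 0$ trivially. The relations (\ref{eq:firstIwahori}), (\ref{eq:secondIwahori}), (\ref{eq:fiveiwahori}), (\ref{eq:sevenIwahori}), (\ref{eq:tenIwahori}), (\ref{eq:elevenIwahori}) carry either an exponent $q = (1+p)^k \in 1 + p\mathbb{Z}_p$ or a factor $(1+Z)^{\pm p}$; expansion modulo $p$ uses the Frobenius identity $(1+Z)^p \equiv 1 + Z^p$ (or the analogous binomial expansion $(1+U)^q \equiv 1 + U + \binom{q}{p} U^p + \cdots$, in which $\binom{q}{p} \equiv -k \pmod{p}$) to push the leading correction into $Z^p$ of weighted degree $p\cdot\deg(Z)$. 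This strictly exceeds $\deg(X) + \deg(Y)$ precisely when $(p-1)\deg(Z) > \deg(X) + \deg(Y) - \deg(Z)$, an inequality guaranteed by $p > n + 1$ since $\deg(Z) \geq 1$ and the relevant gap $\deg(X) + \deg(Y) - \deg(Z)$ never exceeds $n$; hence $Z_{X,Y} = 0$ for these too. Finally, the Chevalley-type relations (\ref{eq:sixIwahori}), (\ref{eightIwahori}), (\ref{eq:fourteenIwahori}), (\ref{eq:fifteenIwahori}) carry a factor $(1+Z)^{\pm 1}$, producing a non-trivial leading commutator $\pm Z$ of weighted degree exactly $\deg(X) + \deg(Y)$, thereby encoding the nonabelian Lie bracket.

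The main obstacle is the careful analysis of relation (\ref{eq:fourIwahori}), which encodes the interaction between $V_\alpha$ and $U_{-\alpha}$ for $\alpha = (i,j) \in \Phi^+$. Its right-hand side simultaneously involves the cascade of torus generators $W_{(i,i+1)} \cdots W_{(j-1,j)}$ and the fractional-exponent factors $(1 + V_\alpha)^{(1+p)^{-1}}$ and $(1 + U_{-\alpha})^{(1+p)^{-1}}$. One must verify that the leading contribution to $[V_\alpha, U_{-\alpha}]$ modulo $p$ amounts to an $\mathbb{F}_p$-combination of the torus generators $w_{(i,i+1)}, \ldots, w_{(j-1,j)}$, all of weighted degree exactly $n = \deg(V_\alpha) + \deg(U_{-\alpha})$, while every other correction lies in strictly higher filtration. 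The expansion of the fractional-exponent factors introduces binomial coefficients that must be carefully split into their $p$-divisible and leading parts, again invoking $p > n + 1$ to control cross-terms. This technical verification is the bulk of the computation and is deferred to section \ref{appendixDlong}.
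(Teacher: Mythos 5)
Your proposal is correct in substance and rests on the same core mechanism as the paper's proof: extract, modulo $p$ and modulo $\mathrm{Fil}^{\deg X+\deg Y+1}$, the leading form of each of the relations (\ref{eq:firstIwahori})--(\ref{eq:fifteenIwahori}) using the hypothesis $p>n+1$ (your degree estimates, including the treatment of (\ref{eq:fourIwahori}) whose leading term is the sum $W_{(i,i+1)}+\cdots+W_{(j-1,j)}$ in degree exactly $n$, match the computations of section \ref{appendixDlong}), and then conclude that the ordered monomials of weighted degree $m$ span $\mathrm{gr}^m\overline{\mathcal{B}}$, their number being $d_m$ by Theorem \ref{eq:bird}. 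Where you genuinely differ is the bookkeeping of the final counting step: the paper proves termination of the straightening process by showing explicitly that each substitution strictly decreases the number of inversions, whereas you invoke PBW for the quotient of the free algebra by the leading forms $XY-YX-c_{X,Y}Z_{X,Y}$. As literally stated this overreaches: calling that quotient $U(\mathfrak{g})$ and asserting $\dim U(\mathfrak{g})_m=d_m$ presupposes that the brackets $[X,Y]=c_{X,Y}Z_{X,Y}$ satisfy the Jacobi identity, which you neither verify nor need. What is actually required is only the elementary spanning half of PBW: since each leading form rewrites an out-of-order product $XY$ as $YX$ plus an $\mathbb{F}_p$-combination of single generators of the same weighted degree, an induction on word length and then on the number of inversions shows that ordered monomials span the quotient in each degree, with no Jacobi identity involved; phrased this way your argument is exactly the paper's inversion-reduction argument in different dress, and the upper bound $\dim\mathrm{gr}^m\overline{\mathcal{B}}\leq d_m$ follows (a surjection onto $\mathrm{gr}^m\Omega_G$ in fact forces equality a posteriori, but that is not needed here). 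One immaterial slip: for $q=(1+p)^k$ one has $\binom{q}{p}\equiv k$, not $-k$, modulo $p$; in any case that term sits in weighted degree at least $p>n+1$, so it never affects the leading form.
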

\begin{proof}
	The Lemma is true for $m=0$. For $m=1$, gr$^1\overline{\mathcal{B}}$ is a quotient of the space $Fil^1\overline{\mathcal{A}}/Fil^2\overline{\mathcal{A}}$ with basis $U_{(n,1)}$ and $V_{(i_1,i_1+1)}$ for $i_1=1,2,...,n-1$. So, $\dim $ gr$^1\overline{\mathcal{B}}\leqslant n=d_1$.\\
	
	To show the general case we will consider each relation and then apply induction argument to show that we decrease the number of inversions.\\
	
	First consider relation $\ref{eq:firstIwahori}$\\
	\begin{center}
		$(1+W_{\delta})(1+U_{\beta})=(1+U_{\beta})^q(1+W_{\delta}),(\delta \in \Pi, \beta \in \Phi^-)$,
	\end{center}
	where $q=(1+p)^{\langle \beta , \delta \rangle } \equiv 1[p]$. We have 
	\begin{center}
		$(1+U_{\beta})^q=1+qU_{\beta}+\frac{q(q-1)}{2}U_{\beta}^2+\cdots $
	\end{center}
	where $\frac{q(q-1)}{2}\equiv 0[p]$. By the Lazard condition $p>n+1$, we can show that
	\begin{center}
		$(1+W_{\delta})(1+U_{\beta})=(1+U_{\beta})(1+W_{\delta})$( mod $Fil^{n+s+1}$),
	\end{center} 
	where $s=$ degree of $U_{\beta}$. This is because, for any natural number $m \geq 2$, $U_{\beta}^m$ has degree $ms$. So, we need to show that 
	\begin{center}
		$ms\leq n+s$ implies ${q \choose m}\equiv 0$ (mod $p$),
	\end{center}
	
	\begin{flushleft}
		i.e. $\hspace{4cm}$ \ $(m-1)s\leq n$ implies ${q \choose m}\equiv 0$ (mod $p$).
	\end{flushleft}
	Now, $(m-1) \leq (m-1)s $. Therefore, it suffices to show that 
	\begin{equation}\label{eq:equationimp}
	m-1 \leq n \implies {q \choose m}\equiv 0 \ (\text{mod} \  p).
	\end{equation}
	But by the Lazard condition we get $m-1 
	\leq n <p-1$. So, $m<p$ and then trivially $val_p(m!)=0$. Hence, ${q \choose m}\equiv 0$ (mod $p$)which gives\\
	
	\begin{center}
		$W_{\delta}U_{\beta}=U_{\beta}W_{\delta}$ in $Fil^{n+s+1},(\delta \in \Pi,\beta \in \Phi^-)$. 
	\end{center}
	Consider a non-commutative monomial 
	\begin{center}
		$x^i=x_{i_1}x_{i_2}\cdots x_{i_w}$.
	\end{center}
	
	To avoid confusion we will write $x^i$ as $x^{i^*}$ because we will be using $i,j,k$ for the roots.  Assume the homogeneous degree of $x^{i^*}$ is equal to $t$. We can change $x^{i^*}$ into a well-ordered monomial ($b \rightarrow i_{b}$ increasing ) by a sequence of transpositions (Lemma $3.2$ of \cite{Laurent}). Consider a move $(b,b+1) \rightarrow (b+1,b)$ and assume $i_{b}>i_{b+1}$. We write
	\begin{center}
		$x^{i^*}=x^fx_{b}x_{b+1}x^e$
	\end{center}
	where deg$(f)=r^{'}$, deg$(e)=s^{'}$, deg$(i^*)=t$. Henceforth, we fix the notations $r^{'},s^{'},t$ to be the degrees of $f,e,i^*$ respectively. If 
	\begin{center}
		$(x_{b},x_{b+1})=(W_{\delta},U_{\beta})$,
	\end{center}
	then $x^fU_{\beta}W_{\delta}x^e\equiv x^{i^*}$ (mod $Fil^{t+1}$),  $t=r^{'}+n+s+s^{'}$. This reduces the number of inversions in $x^{i^*}$. \\
	
	We  do the same argument for the other relations (\ref{eq:secondIwahori}-\ref{eq:fifteenIwahori}), i.e. we  consider each of the relations and show that we reduce the number of inversions in each case. The computations can be found in section \ref{appendixDlong}. This completes the proof Proposition \ref{eq:longlemma}, because note that, inside $gr^m\overline{\mathcal{B}}$, we can arrange the variables in the wrong order by a sequence of transposition to put them in the right order (the right order as in the algebra $\mathcal{A}$). This shows that $\dim \text { }  gr^m\overline{\mathcal{B}} \leq \dim \text{ } gr^m\Omega_G$ since, by theorem \ref{eq:bird}, gr$^m\Omega_G$ contains homogeneous \textit{symmetric} polynomials.

\end{proof}
\subsection{Explicit presentations of the Iwasawa algebras $\Lambda(G)$ and $\Omega_G$}\label{sub:maintheoremsiwahori}
In this section we give our main theorem constructing an explicit presentation of the Iwasawa algebras  $\Lambda(G)$ and $\Omega_G$. In corollary \ref{eq:theoremgln}, we extend our result to include the case of the pro-$p$ Iwahori of $GL(n,\mathbb{Z}_p)$. 
\begin{thm}
	\label{eq:secondimptheoremIwahori}
	The map $\overline{\mathcal{A}} \rightarrow \Omega_G$ gives an isomorphism $\overline{\mathcal{B}}:=\overline{\mathcal{A}}/\overline{\mathcal{R}} \cong \Omega_G$. 
\end{thm}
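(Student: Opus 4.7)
The plan is to show that the continuous surjection $\overline{\varphi}\colon \overline{\mathcal{B}} \to \Omega_G$ established in Section~\ref{sub:conti} is an isomorphism by passing to the associated graded rings. Both $\overline{\mathcal{B}}$ and $\Omega_G$ carry the filtrations defined in Section~\ref{sub:conti} (the generators $U_\beta, W_\delta, V_\alpha$ receiving matching degrees $n-i+j$, $n$, $j''-i''$ on the two sides), so $\overline{\varphi}$ is automatically a morphism of filtered $\mathbb{F}_p$-algebras and induces a graded morphism $\mathrm{gr}^\bullet\overline{\varphi}\colon \mathrm{gr}^\bullet \overline{\mathcal{B}} \to \mathrm{gr}^\bullet \Omega_G$.

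Next I would observe that $\mathrm{gr}^m \overline{\varphi}$ is surjective for every $m \geq 0$, since surjectivity of $\overline{\varphi}$ is compatible with the filtrations: any class in $F^m\Omega_G/F^{m+1}\Omega_G$ is represented by an element of $F^m\Omega_G$, which is the image under $\overline{\varphi}$ of some element of $F^m\overline{\mathcal{B}}$. Combining this with the dimension bound $\dim_{\mathbb{F}_p} \mathrm{gr}^m \overline{\mathcal{B}} \leq d_m = \dim_{\mathbb{F}_p} \mathrm{gr}^m \Omega_G$ provided by Proposition~\ref{eq:longlemma} forces $\mathrm{gr}^m \overline{\varphi}$ to be an isomorphism of finite-dimensional $\mathbb{F}_p$-vector spaces for every $m$, and hence the total graded map is an isomorphism.

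Finally, I would promote this to an isomorphism of filtered rings by the standard argument: if $x \in \ker \overline{\varphi}$ were nonzero, there would exist a largest $m$ with $x \in F^m \overline{\mathcal{B}}$, and the nonzero symbol $\sigma(x) \in \mathrm{gr}^m \overline{\mathcal{B}}$ would map to zero under $\mathrm{gr}^m \overline{\varphi}$, contradicting injectivity. This uses the fact that both filtrations are separated, which holds for $\overline{\mathcal{B}}$ because it is the quotient of the complete algebra $\overline{\mathcal{A}}$ by the closed ideal $\overline{\mathcal{R}}$ (Corollary~\ref{eq:lemmaunimp}), and for $\Omega_G$ because the filtration is induced from the $p$-valuation $\tilde{\omega}$ on $\Lambda(G)$ (cf.~\cite[\S28]{Lie}). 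The main obstacle in the whole proof is therefore the dimension bound of Proposition~\ref{eq:longlemma}, which is the step where the relations~(\ref{eq:firstIwahori}--\ref{eq:fifteenIwahori}) and the hypothesis $p > n+1$ enter essentially; once it is in hand, the argument above is purely formal.
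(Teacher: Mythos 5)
Your proposal follows essentially the same route as the paper: surjectivity of $\mathrm{gr}^\bullet\overline{\varphi}$ together with the dimension bound of Proposition~\ref{eq:longlemma} forces the graded map to be an isomorphism, and one then lifts this to $\overline{\varphi}$ itself using that the filtration on $\overline{\mathcal{B}}=\overline{\mathcal{A}}/\overline{\mathcal{R}}$ is well behaved because $\overline{\mathcal{R}}$ is closed. The only (immaterial) difference is the final step: the paper invokes completeness of the filtration and cites \cite[Theorem 4(5), p.~31]{Zaras}, whereas you argue injectivity directly from separatedness of the filtrations, surjectivity being already known — an equivalent formal argument.
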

\begin{proof}
	(Cf. \cite{Laurent} and \cite{Clozel2}).  The natural map $\varphi: \mathcal{A} \rightarrow \Lambda(G)$ respects the filtration and reduces modulo $p$ to $\overline{\varphi}:\overline{\mathcal{B}} \rightarrow \Omega_G$.   As $\overline{\varphi}$ is surjective, the natural map of graded algebras
	\begin{center}
		gr$\overline{\varphi}:$ gr$^{\bullet}\overline{\mathcal{B}}\rightarrow $ gr$^{\bullet}\Omega_{G}$
	\end{center}
	is surjective. Moreover, it is an isomorphism because $\dim $ gr$^m\overline{\mathcal{B}}\leqslant \dim $ gr$^m\Omega_{G}$ (proposition \ref{eq:longlemma}). Since the filtration on $\overline{\mathcal{B}}$ is complete, we deduce that $\overline{\varphi}$ is an isomorphism (cf. Theorem $4$ ($5$), p.$31$ of \cite{Zaras}). We have $\overline{\mathcal{B}}$ complete because $\overline{\mathcal{B}}=\overline{\mathcal{A}}/\overline{\mathcal{R}}$, where $\overline{\mathcal{R}}$ is closed and therefore complete for the filtration induced from that of $\overline{\mathcal{A}}$.
\end{proof}
\begin{thm}\label{eq:maintheoremIwahori}
	The map $\mathcal{A}\rightarrow \Lambda(G)$ gives, by passing to the quotient, an isomorphism $\mathcal{B}=\mathcal{A}/\mathcal{R} \cong \Lambda(G)$.
\end{thm}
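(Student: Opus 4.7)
The plan is to lift the mod-$p$ isomorphism of Theorem \ref{eq:secondimptheoremIwahori} to $\mathbb{Z}_p$-coefficients, by a standard topological Nakayama-style argument. First, Lemma \ref{relations} says every generator of $\mathcal{R}$ is sent to $0$ by the continuous surjection $\varphi \colon \mathcal{A} \to \Lambda(G)$ constructed in Section \ref{sub:conti}, and since $\mathcal{R}$ is a closed two-sided ideal, continuity of $\varphi$ forces $\varphi(\mathcal{R}) = 0$. Hence $\varphi$ descends to a continuous surjection $\tilde{\varphi} \colon \mathcal{B} = \mathcal{A}/\mathcal{R} \to \Lambda(G)$, and the remaining task is to show $\tilde{\varphi}$ is injective.

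For injectivity, suppose $f \in \ker \tilde{\varphi}$. Identifying $\mathcal{B}/p\mathcal{B}$ with $\overline{\mathcal{B}} = \overline{\mathcal{A}}/\overline{\mathcal{R}}$ in the natural way and using Theorem \ref{eq:secondimptheoremIwahori}, the image of $f$ in $\overline{\mathcal{B}}$ lies in the kernel of the isomorphism $\overline{\mathcal{B}} \xrightarrow{\sim} \Omega_G$, so $f \in p\mathcal{B}$; write $f = p g_1$. Then $p \tilde{\varphi}(g_1) = 0$ in $\Lambda(G)$, and since Lemma \ref{ctilde} exhibits $\Lambda(G)$ as a free $\mathbb{Z}_p$-module (in particular $\mathbb{Z}_p$-torsion-free), $\tilde{\varphi}(g_1) = 0$. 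Iterating the same argument for $g_1, g_2, \ldots$ yields $f \in \bigcap_{n \geq 1} p^n \mathcal{B}$.

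To conclude I would show $\bigcap_n p^n \mathcal{B} = 0$, using the $v_{\mathcal{A}}$-filtration introduced in Section \ref{sub:conti}. For any $h \in \mathcal{B}$ one has $v_{\mathcal{A}}(p^n h) \geq n$, so membership in $\bigcap_n p^n \mathcal{B}$ forces $v_{\mathcal{A}}(f) = \infty$. But the quotient filtration on $\mathcal{B}$ is separated: since $\mathcal{R}$ is by hypothesis \emph{closed} in $\mathcal{A}$, one has $\bigcap_m (\mathrm{Fil}^m \mathcal{A} + \mathcal{R}) = \mathcal{R}$, which translates to $\bigcap_m \mathrm{Fil}^m \mathcal{B} = 0$. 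Therefore $f = 0$, and $\tilde{\varphi}$ is an isomorphism.

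The substantive content of the theorem was already absorbed into Theorem \ref{eq:secondimptheoremIwahori}, so there is no real obstacle here; the only point requiring care is separatedness of the $v_{\mathcal{A}}$-filtration on $\mathcal{B}$, which is immediate from the standing assumption that $\mathcal{R}$ is closed.
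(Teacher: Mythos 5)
Your proof is correct and follows essentially the same route as the paper: descend $\varphi$ to $\mathcal{B}$, use the mod-$p$ isomorphism of Theorem \ref{eq:secondimptheoremIwahori} to extract successive factors of $p$, invoke $\mathbb{Z}_p$-torsion-freeness of $\Lambda(G)$ at each step, and conclude via closedness of $\mathcal{R}$. The paper phrases the last step upstairs in $\mathcal{A}$ (writing $f=r_n+p^nf_n$ and letting $p^nf_n\to 0$) rather than as $\bigcap_n p^n\mathcal{B}=0$, but this is the same argument, and your explicit mention of torsion-freeness only makes a step the paper leaves implicit more transparent.
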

\begin{proof}
	 We need the argument of \cite{Laurent}.
	The reduction of $\varphi$ is $\overline{\varphi}$. We recall that $\overline{\mathcal{R}}$ is the image of $\mathcal{R}$ in $\overline{\mathcal{A}}$. Let $f \in \mathcal{A}$ satisfies $\varphi(f)=0$. We then have $\overline{f} \in \overline{\mathcal{R}}$ since $\overline{\mathcal{A}}/\overline{\mathcal{R}}\cong \Omega_{G}$. So $f=r_1+pf_1, r_1 \in \mathcal{R},f_1 \in \mathcal{A}.$ Then $\varphi(f_1)=0$. Inductively, we obtain an expression $f=r_n+p^nf_n$ of the same type. Since $p^nf_n\rightarrow 0$ in $\mathcal{A}$ and $\mathcal{R}$ is closed, we deduce that $f \in \mathcal{R}$. 
\end{proof}

This gives us the following corollary:
\begin{corollary}
	\label{eq:theoremgln}
	The Iwasawa algebra of the pro-$p$ Iwahori subgroup of $GL_n(\mathbb{Z}_p)$ is a quotient $\mathcal{A}^{\prime}/\mathcal{R}$, with 
	$\mathcal{A}^{\prime}=\mathbb{Z}_p\{\{Z,V_{\alpha},U_{\beta},W_{\delta}, \alpha \in \Phi^+,\beta\in \Phi^-,\delta \in \Pi\}\}$ and $\mathcal{R}$ is defined by the relations ($\ref{eq:firstIwahori}-\ref{eq:fifteenIwahori}$) and \\
	(Comm) $Z$ commutes with $U_{\beta},V_{\alpha},W_{\delta}$ for all $\alpha,\beta, \delta$.
\end{corollary}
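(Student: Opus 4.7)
The plan is to reduce the $GL_n$ case to the already-established $SL_n$ case via a topological direct product decomposition of the pro-$p$ Iwahori group. Let $G'$ denote the pro-$p$ Iwahori of $GL_n(\mathbb{Z}_p)$ and let $G$ denote the pro-$p$ Iwahori of $SL_n(\mathbb{Z}_p)$. First I would exhibit an internal direct product $G' = Z \times G$, where $Z$ is the closed central subgroup topologically generated by the scalar matrix $(1+p)I_n$. The inclusion $Z \cdot G \subseteq G'$ is clear since $(1+p)I_n \in G'$ and $G \subseteq G'$. Conversely, given $M \in G'$ we have $\det M \in 1 + p\mathbb{Z}_p$; because the hypothesis $p > n+1$ forces $n \in \mathbb{Z}_p^{\times}$, the $p$-adic logarithm/exponential on $1 + p\mathbb{Z}_p$ produces a unique $n$-th root $\mu \in 1 + p\mathbb{Z}_p$ of $\det M$, and the factorization $M = (\mu I_n)\cdot (\mu^{-1} M)$ puts $M$ into $Z \cdot G$. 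The intersection $Z \cap G$ is trivial since $(1+p)^{t} I_n$ has determinant $(1+p)^{nt}$, which equals $1$ only when $t=0$ (again using $n \in \mathbb{Z}_p^{\times}$).

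Next, since $Z$ sits in the center of $G'$, this decomposition makes $G'$ an internal direct product of compact $p$-adic Lie groups, and $Z \cong \mathbb{Z}_p$ as a uniform pro-$p$ group of rank one. Passing to Iwasawa algebras, the direct product decomposition yields a natural isomorphism of topological $\mathbb{Z}_p$-algebras
\[
\Lambda(G') \;\cong\; \Lambda(Z)\,\widehat{\otimes}_{\mathbb{Z}_p}\,\Lambda(G),
\]
where $\Lambda(Z) \cong \mathbb{Z}_p[[Z]]$, the formal variable $Z$ corresponding to $(1+p)I_n - 1$.

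Finally, I would invoke Theorem \ref{eq:maintheoremIwahori}, which gives $\Lambda(G) \cong \mathcal{A}/\mathcal{R}$. Combining this with the completed tensor product above identifies $\Lambda(G')$ with $\mathcal{A}'/\mathcal{R}'$, where $\mathcal{A}' = \mathbb{Z}_p\{\{Z,V_\alpha,U_\beta,W_\delta\}\}$ is the non-commutative power series algebra with one additional variable $Z$, and $\mathcal{R}'$ is the closed two-sided ideal generated by the relations (\ref{eq:firstIwahori}--\ref{eq:fifteenIwahori}) together with the commutation relations (Comm) stating that $Z$ commutes with each $U_\beta$, $V_\alpha$ and $W_\delta$.

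The main technical step is the last identification: one must check that the completed tensor product $\mathbb{Z}_p[[Z]]\,\widehat{\otimes}_{\mathbb{Z}_p}\,(\mathcal{A}/\mathcal{R})$ equals $\mathcal{A}'/\mathcal{R}'$ as topological $\mathbb{Z}_p$-algebras. This is where the commutation relations (Comm) do their work: imposing them on $\mathcal{A}'$ forces $Z$ to be central, so that every element of $\mathcal{A}'/\mathcal{R}'$ can be uniquely written as a convergent sum $\sum_{k \geqslant 0} Z^{k}\,a_{k}$ with $a_k \in \mathcal{A}/\mathcal{R}$, matching the underlying $\mathbb{Z}_p$-module structure of the completed tensor product. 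Beyond this bookkeeping, the only other mild subtlety is verifying that the direct product decomposition respects the canonical $p$-valuations, so that both sides carry compatible filtrations; this is automatic since $Z$ lies in the center and its generator has $p$-valuation equal to that of the corresponding scalar matrix.
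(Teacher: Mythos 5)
The paper itself gives no written argument for Corollary \ref{eq:theoremgln}: it is stated as an immediate consequence of Theorem \ref{eq:maintheoremIwahori}, with only the remark that $Z$ corresponds to $(1+p)I_n$. Your proposal supplies exactly the deduction that remark points to, and it is correct: the hypothesis $p>n+1$ gives $p\nmid n$, so every $M$ in the pro-$p$ Iwahori $G'$ of $GL_n(\mathbb{Z}_p)$ factors uniquely as $(\mu I_n)(\mu^{-1}M)$ with $\mu=(1+p)^t\in 1+p\mathbb{Z}_p$ and $\mu^{-1}M\in G$, yielding the internal direct product $G'=Z\times G$ with $Z$ central and $Z\cong\mathbb{Z}_p$; the standard isomorphism $\Lambda(Z\times G)\cong\Lambda(Z)\,\widehat{\otimes}_{\mathbb{Z}_p}\Lambda(G)$ and Theorem \ref{eq:maintheoremIwahori} then reduce everything to identifying $\mathbb{Z}_p[[Z]]\,\widehat{\otimes}_{\mathbb{Z}_p}(\mathcal{A}/\mathcal{R})$ with $\mathcal{A}'$ modulo the relations (\ref{eq:firstIwahori}--\ref{eq:fifteenIwahori}) and (Comm).

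One small point to tighten in that last identification: you justify it by asserting that every element of the quotient of $\mathcal{A}'$ has a \emph{unique} expression $\sum_{k\geq 0}Z^k a_k$ with $a_k\in\mathcal{A}/\mathcal{R}$, but uniqueness is essentially the statement you are proving, so as written the argument is mildly circular. It is cleaner to construct the two maps explicitly: the continuous algebra map $\mathcal{A}'\to\mathbb{Z}_p[[Z]]\,\widehat{\otimes}\,(\mathcal{A}/\mathcal{R})$ killing the relations and (Comm), and in the other direction the continuous map determined by $Z^k\otimes \bar a\mapsto Z^k\bar a$, which is multiplicative precisely because (Comm) makes $Z$ central in the quotient and because the ideal of the corollary contains (the image of) $\mathcal{R}$. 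Checking that the composites are the identity on the topological generators finishes the proof; only the \emph{existence} of expressions $\sum_k Z^k a_k$ (obtained by pushing $Z$ to the left via (Comm), a convergent rewriting since multiplication by $Z$ raises the filtration degree) is needed for surjectivity, and uniqueness then comes out as a consequence rather than an input.
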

Here the variable $Z$  corresponds to the element $(1+p)I_n\in GL_n(\mathbb{Z}_p)$. \\

In conclusion, for $p>n+1$, we have found a Lazard basis of $G$ with respect to its $p$-valuation $\omega$ (see theorem $\ref{eq:orderedbasisiwahori}$). Furthermore, we have obtained the relations inside the Iwasawa algebra of $G$, thus giving us an explicit presentation of $\Lambda(G)$ (see theorem \ref{eq:maintheoremIwahori}) by controlling the dimension of gr$^m\overline{\mathcal{B}}$ (Proposition \ref{eq:longlemma}). This readily gives the presentation of the Iwasawa algebra of the pro-$p$ Iwahori subgroup of $GL_n(\mathbb{Z}_p)$ (corollary \ref{eq:theoremgln}). Also, as noted in the introduction of this article, it is a future project of Dong Han and Feng Wei to use our results  to answer the open question on the existence of non-trivial normal elements in $\Omega_G$ (section $5$ of \cite{HW1}). Note that, in \cite{CorRay}, we have found out a minimal set of topological generators for the pro-$p$ Iwahori of any general reductive group over $\Z_p$ and a nice future project would be to find an explicit presentation of the Iwasawa algebras for general pro-$p$ Iwahori subgroups defined for any split reductive group.

\newpage

\newpage

\section{Computations for the proof of Lemma \ref{relations}}\label{appendixCrelations}

In this section we complete the computations needed for Lemma \ref{relations}. We quote lemma \ref{relations}.\\

\textbf{\textit{Lemma} \ref{relations}}	\textit{In the Iwasawa algebra} $\Lambda(G)$, \textit{the variables} $V_{\alpha}, W_{\delta}, U_{\beta}$ \textit{satisfy the following relations.}
{
	\begin{align*}
	(1+W_{\delta})(1+U_{\beta})&=(1+U_{\beta})^q(1+W_{\delta}),(\beta \in \Phi^-), q = (1+p)^{\langle \beta, \delta \rangle}\\
	(1+W_{\delta})(1+V_{\alpha})&=(1+V_{\alpha})^{q^{'}}(1+W_{\delta}),(\alpha \in \Phi^+),  q^{'} = (1+p)^{\langle \alpha, \delta \rangle}\\
	V_{\alpha}U_{\beta}&=U_{\beta}V_{\alpha},(\alpha \in \Phi^+,\beta \in \Phi^-, \alpha \neq -\beta, \alpha+\beta \notin \Phi)\\
	(1+V_{\alpha})(1+U_{\beta})&=(1+V_{(i,k)})^p(1+U_{\beta})(1+V_{\alpha}),i<k,(\alpha =(i,j)\in \Phi^+,\beta=(j,k) \in \Phi^-) \\
	(1+V_{\alpha})(1+U_{\beta})&=(1+U_{(i,k)})(1+U_{\beta})(1+V_{\alpha}),i>k,(\alpha =(i,j)\in \Phi^+,\beta=(j,k) \in \Phi^-)\\
	(1+V_{\alpha})(1+U_{\beta})&=(1+V_{(k,j)})^{-p}(1+U_{\beta})(1+V_{\alpha}),k<j,(\alpha =(i,j)\in \Phi^+,\beta=(k,i) \in \Phi^-)\\
	(1+V_{\alpha})(1+U_{\beta})&=(1+U_{(k,j)})^{-1}(1+U_{\beta})(1+V_{\alpha}),k>j,(\alpha=(i,j) \in \Phi^+,\beta=(k,i) \in \Phi^-) \\
	(1+V_{\alpha})(1+U_{-\alpha})&=(1+U_{-\alpha})^{(1+p)^{-1}}(1+W_{(i,i+1)})\cdots (1+W_{(j-1,j)})(1+V_{\alpha})^{(1+p)^{-1}},(\alpha=(i,j) \in \Phi^+) \\
	U_{\beta_1}U_{\beta_2}&=U_{\beta_2}U_{\beta_1},(\beta_1,\beta_2 \in \Phi^-, \beta_1+\beta_2 \notin \Phi) \\
	(1+U_{\beta_1})(1+U_{\beta_2})&=(1+U_{(i,k)})^p(1+U_{\beta_2})(1+U_{\beta_1}),(\beta_1=(i,j),\beta_2=(j,k) \in \Phi^-)\\
	(1+U_{\beta_1})(1+U_{\beta_2})&=(1+U_{(k,j)})^{-p}(1+U_{\beta_2})(1+U_{\beta_1}),(\beta_1=(i,j),\beta_2=(k,i) \in \Phi^-) \\
	W_{\delta_1}W_{\delta_2}&=W_{\delta_2}W_{\delta_1},(\delta_1,\delta_2 \in \Pi, \delta_1 \neq \delta_2)\\
	V_{\alpha_1}V_{\alpha_2}&=V_{\alpha_2}V_{\alpha_1},(\alpha_1,\alpha_2 \in \Phi^+, \alpha_1+\alpha_2 \notin \Phi)\\
	(1+V_{\alpha_1})(1+V_{\alpha_2})&=(1+V_{(i,k)})(1+V_{\alpha_2})(1+V_{\alpha_1}),(\alpha_1=(i,j),\alpha_2=(j,k) \in \Phi^+)\\
	(1+V_{\alpha_2})(1+V_{\alpha_1})&=(1+V_{(k,j)})(1+V_{\alpha_1})(1+V_{\alpha_2}),(\alpha_1=(i,j),\alpha_2=(k,i) \in \Phi^+).
	\end{align*}
}
\textbf{\textit{Proof.}} Recalling the notations $h_{\delta}(1+p)$ and $x_{\beta}(p)$ in section \ref{sub:ordiwa}, Steinberg \cite{Yale} gives

\begin{center}
	$h_{\delta}(1+p)x_{\beta}(p)h_{\delta}(1+p)^{-1}=x_{\beta}((1+p)^{\langle \beta , \delta \rangle }p)$, $(\beta\in \Phi^-,\delta \in \Pi)$,
\end{center}
where $\langle \beta , \delta \rangle \in \mathbb{Z}$ (cf. p. $30$ of \cite{Yale}).
So the corresponding relation in the Iwasawa algebra is 
\begin{equation}
(1+W_{\delta})(1+U_{\beta})=(1+U_{\beta})^q(1+W_{\delta}),(\beta \in \Phi^-),
\end{equation}
where $q = (1+p)^{\langle \beta, \delta \rangle}$.\\
\hrule
\vspace{.2cm}
We also have 
\begin{center}
	$h_{\delta}(1+p)x_{\alpha}(1)h_{\delta}(1+p)^{-1}=x_{\alpha}((1+p)^{\langle \alpha , \delta \rangle })$,($\alpha \in \Phi^+$).
\end{center}
So the corresponding relation in the Iwasawa algebra is 
\begin{equation}
(1+W_{\delta})(1+V_{\alpha})=(1+V_{\alpha})^{q^{'}}(1+W_{\delta}),(\alpha \in \Phi^+),
\end{equation}
where $q^{'} = (1+p)^{\langle \alpha, \delta \rangle}$.\\
\hrule
\vspace{.2cm}
If $\alpha \neq -\beta$ and $\alpha+\beta \notin \Phi$, then by example (a), p.$24$ of \cite{Yale} we have
\begin{center}
	$x_{\alpha}(1)x_{\beta}(p)=x_{\beta}(p)x_{\alpha}(1),(\alpha \in \Phi^+,\beta \in \Phi^-).$
\end{center}
So in this case, we have the following relation in the Iwasawa algebra:
\begin{equation}
V_{\alpha}U_{\beta}=U_{\beta}V_{\alpha},(\alpha \in \Phi^+,\beta \in \Phi^-).
\end{equation}
\hrule
\vspace{.2cm}
If on the contrary, $\alpha \neq -\beta$ and $\alpha+\beta \in \Phi$, then we have two subcases:\\

\textit{Subcase 1.} $\alpha=(i,j),\beta=(j,k)$
then  we have by direct computation (writing $x_{(i,j)}(1)=1+E_{i,j}$)
\begin{center}
	$x_{\alpha}(1)x_{\beta}(p)=[x_{\alpha}(1),x_{\beta}(p)]x_{\beta}(p)x_{\alpha}(1)$.\\
	$x_{\alpha}(1)x_{\beta}(p)=x_{(i,k)}(p)x_{\beta}(p)x_{\alpha}(1),((i,k)=\alpha+\beta)$.
\end{center}
Now, in the Iwasawa algebra, $x_{(i,k)}(p)$ corresponds to $(1+V_{(i,k)})$ if $i<k$ and to $(1+U_{(i,k)})$ if $i>k$. \\

Thus, we get the following two relations in the Iwasawa algebra:
\begin{equation}
(1+V_{\alpha})(1+U_{\beta})=(1+V_{(i,k)})^p(1+U_{\beta})(1+V_{\alpha}),i<k,(\alpha \in \Phi^+,\beta \in \Phi^-),
\end{equation}
\begin{equation}
(1+V_{\alpha})(1+U_{\beta})=(1+U_{(i,k)})(1+U_{\beta})(1+V_{\alpha}),i>k,(\alpha \in \Phi^+,\beta \in \Phi^-),
\end{equation}
where $(i,k)=\alpha +\beta =(i,j)+(j,k)$. \\

\textit{Subcase 2}. $\alpha=(i,j),\beta=(k,i)$,
then we have 
\begin{center}
	$x_{\alpha}(1)x_{\beta}(p)=[x_{\alpha}(1),x_{\beta}(p)]x_{\beta}(p)x_{\alpha}(1)$,\\
	$x_{\alpha}(1)x_{\beta}(p)=x_{(k,j)}(-p)x_{\beta}(p)x_{\alpha}(1),(\alpha+\beta=(k,j))$.
\end{center}
So, as before, we get the following two relations in the Iwasawa algebra:
\begin{equation}
(1+V_{\alpha})(1+U_{\beta})=(1+V_{(k,j)})^{-p}(1+U_{\beta})(1+V_{\alpha}),k<j,(\alpha \in \Phi^+,\beta \in \Phi^-),
\end{equation}
\begin{equation}
(1+V_{\alpha})(1+U_{\beta})=(1+U_{(k,j)})^{-1}(1+U_{\beta})(1+V_{\alpha}),k>j,(\alpha \in \Phi^+,\beta \in \Phi^-),
\end{equation}
where $(k,j)=\beta+\alpha =(k,i)+(i,j)$.\\
\hrule
\vspace{.2cm}
If we have $\alpha=-\beta$, let $\alpha=(i,j)[i<j]$, then just by computation one can show that 
\begin{center}
	$x_{\alpha}(1)x_{\beta}(p)=x_{\beta}(p(1+p)^{-1})Hx_{\alpha}((1+p)^{-1})$,
\end{center}
where $H$ is the diagonal matrix with $(1+p)$ in the $(i,i)^{th}$ place and $(1+p)^{-1}$ in the $(j,j)^{th}$ place and $1$ in the other diagonal positions.
The above relation, in $SL_2(\mathbb{Z}_p)$, can be realized by the following matrix equation:

\begin{center}
	$		
	\begin{pmatrix} 1 & 1 \\ 0 & 1 \end{pmatrix}
	\begin{pmatrix} 1 & 0 \\ p & 1 \end{pmatrix}
	= 
	\begin{pmatrix} 1 & 0 \\ p(1+p)^{-1} & 1 \end{pmatrix}
	\begin{pmatrix} (1+p) & 0 \\ 0 & (1+p)^{-1} \end{pmatrix}
	\begin{pmatrix} 1 & (1+p)^{-1} \\ 0 & 1 \end{pmatrix}
	$.
\end{center}

So we obtain
\begin{center}
	$x_{\alpha}(1)x_{\beta}(p)=x_{\beta}(p(1+p)^{-1})h_{(i,i+1)}(1+p)\cdots h_{(j-1,j)}(1+p)x_{\alpha}((1+p)^{-1})$. 
\end{center}
Hence, the corresponding relation in the Iwasawa algebra is 
\begin{equation}
(1+V_{\alpha})(1+U_{-\alpha})=(1+U_{-\alpha})^{(1+p)^{-1}}(1+W_{(i,i+1)})\cdots (1+W_{(j-1,j)})(1+V_{\alpha})^{(1+p)^{-1}},(\alpha \in \Phi^+).
\end{equation}
\hrule
\vspace{.2cm}
In the following, we give relations among the variables corresponding to the lower unipotent subgroup.\\

Let $\beta_1,\beta_2 \in \Phi^{-}, \beta_1+\beta_2 \notin \Phi$, then $x_{\beta_1}(p)x_{\beta_2}(p)=x_{\beta_2}(p)x_{\beta_1}(p)$. So the corresponding relation in the Iwasawa algebra is 
\begin{equation}
U_{\beta_1}U_{\beta_2}=U_{\beta_2}U_{\beta_1},(\beta_1,\beta_2 \in \Phi^-).
\end{equation}
\hrule
\vspace{.2cm}
On the other hand, if $\beta_1+\beta_2\in \Phi$, then we have the following two cases, computations of which actually take place in $GL(3)$:\\

\textit{Case 1}. Let $\beta_1=(i,j),\beta_2=(j,k) [i>j>k]$.
Then, 
\begin{center}
	$x_{\beta_1}(p)x_{\beta_2}(p)=x_{(i,k)}(p^2)x_{\beta_2}(p)x_{\beta_1}(p),(\beta_1+\beta_2=(i,k))$.
\end{center}
The corresponding relation in the Iwasawa algebra is 
\begin{equation}
(1+U_{\beta_1})(1+U_{\beta_2})=(1+U_{(i,k)})^p(1+U_{\beta_2})(1+U_{\beta_1}),(\beta_1,\beta_2 \in \Phi^-,(i,k)=\beta_1+\beta_2),
\end{equation}
since $x_{(i,k)}(p)$ corresponds to $(1+U_{(i,k)})$.\\

\textit{Case 2.} Let $\beta_1=(i,j),\beta_2=(k,i),[k>i>j]$. We have
\begin{center}
	$x_{\beta_1}(p)x_{\beta_2}(p)=x_{(k,j)}(-p^2)x_{\beta_2}(p)x_{\beta_1}(p),(\beta_1+\beta_2=(k,j))$.
\end{center}
The corresponding relation in the Iwasawa algebra is 
\begin{equation}
(1+U_{\beta_1})(1+U_{\beta_2})=(1+U_{(k,j)})^{-p}(1+U_{\beta_2})(1+U_{\beta_1}),(\beta_1,\beta_2 \in \Phi^-,(k,j)=\beta_1+\beta_2).
\end{equation}
\hrule
\vspace{.2cm}
As the diagonal elements commute, we have for $\delta_1,\delta_2 \in \Pi, \delta_1 \neq \delta_2$,
\begin{equation}
W_{\delta_1}W_{\delta_2}=W_{\delta_2}W_{\delta_1},(\delta_1,\delta_2 \in \Pi).
\end{equation}
\hrule
\vspace{.2cm}
Now, we give the relations among the variables corresponding to the upper unipotent subgroup of $G$.\\

If $\alpha_1,\alpha_2 \in \Phi^+, \alpha_1+\alpha_2 \notin \Phi$, then $x_{\alpha_1}(1)x_{\alpha_2}(1)=x_{\alpha_2}(1)x_{\alpha_1}(1)$. So the corresponding relation in the Iwasawa algebra is 
\begin{equation}
V_{\alpha_1}V_{\alpha_2}=V_{\alpha_2}V_{\alpha_1},(\alpha_1,\alpha_2 \in \Phi^+).
\end{equation}
\hrule
\vspace{.2cm}
On the other hand, if $\alpha_1+\alpha_2\in \Phi$, then we have the following two subparts:\\

\textit{Subpart 1}. Let $\alpha_1=(i,j),\alpha_2=(j,k) [i<j<k]$.
Then,
\begin{center}
	$x_{\alpha_1}(1)x_{\alpha_2}(1)=x_{(i,k)}(1)x_{\alpha_2}(1)x_{\alpha_1}(1),(\alpha_1+\alpha_2=(i,k))$.
\end{center}
The corresponding relation in the Iwasawa algebra is 
\begin{equation}
(1+V_{\alpha_1})(1+V_{\alpha_2})=(1+V_{(i,k)})(1+V_{\alpha_2})(1+V_{\alpha_1}),(\alpha_1,\alpha_2 \in \Phi^+,\alpha_1+\alpha_2=(i,k)).
\end{equation}
\textit{Subpart 2.} Let $\alpha_1=(i,j),\alpha_2=(k,i),[k<i<j]$. We have the relation
\begin{center}
	$x_{\alpha_1}(1)x_{\alpha_2}(1)=x_{(k,j)}(-1)x_{\alpha_2}(1)x_{\alpha_1}(1),(\alpha_1+\alpha_2=(k,j))$.
\end{center}
The corresponding relation in the Iwasawa algebra is 
\begin{equation*}
(1+V_{\alpha_1})(1+V_{\alpha_2})=(1+V_{(k,j)})^{-1}(1+V_{\alpha_2})(1+V_{\alpha_1}).
\end{equation*}
which is the same as 
\begin{equation}
(1+V_{\alpha_2})(1+V_{\alpha_1})=(1+V_{(k,j)})(1+V_{\alpha_1})(1+V_{\alpha_2}),(\alpha_1,\alpha_2 \in \Phi^+,\alpha_1+\alpha_2=(k,j)).
\end{equation}
\hrule
\vspace{.2cm}
\newpage
\section{Computations for the proof of Proposition \ref{eq:longlemma}}\label{appendixDlong}

In this section we give the computations necessary to prove proposition 
\ref{eq:longlemma}. We quote proposition \ref{eq:longlemma}.\\

\textbf{\textit{Proposition \ref{eq:longlemma}}}	\textit{For} $m\geqslant 0$, \textit{we have} $\dim $ \textit{gr}$^m\overline{\mathcal{B}} \leqslant d_m=\dim_{\mathbb{F}_p} \text{gr}^m\Omega_G $.\\

\textbf{\textit{Proof}} 
We provide the remaining computations for the proof as indicated in the last paragraph of section \ref{sub:longlemmaiwahori}. There we have already explained the strategy of the proof and dealt with relation \ref{eq:firstIwahori}. Consider relation $\ref{eq:secondIwahori}$. The argument of this exactly follows the case of relation \ref{eq:firstIwahori}  already treated, for which we have shown that we can reduce the number of inversions and so we omit it. Relation $\ref{eq:thirdIwahori}$ is also obvious to deal with, hence we consider $\ref{eq:fiveiwahori}$. It  reduces to 
\begin{center}
	$V_{\alpha}U_{\beta}=U_{\beta}V_{\alpha}$ (mod $Fil^{r+s+1}$),$(\alpha \in \Phi^+,\beta \in \Phi^-)$, 
\end{center}
where $r=$ deg $(V_{\alpha})$ and $s= $deg $(U_{\beta})$.\\

In this case,  we have $\alpha=(i,j),\ \beta=(j,k), \  r=j-i,\ s=n-j+k$ and $k-i=$ deg($V_{(i,k)=\alpha+\beta}$). So, we need to show that 
\begin{center}
	$(1+V_{(i,k)})^p\equiv 1 $(mod $Fil^{n-i+k+1}$).
\end{center}
That is, for any natural number $m \geq 2$, we have to show
that 
\begin{center}
	$(k-i)m \leq n-i+k \implies {p\choose m}\equiv 0$ (mod $p$),
\end{center}
i.e. we have to show that 
\begin{center}
	$(k-i)(m-1)\leq n \implies {p\choose m}\equiv 0$ (mod $p$).
\end{center}
But since $k>i$ (see relation $\ref{eq:fiveiwahori}$), we only have to show that 
\begin{center}
	$(m-1)\leq n \implies {p\choose m}\equiv 0$ (mod $p$),
\end{center}
which we checked in $\ref{eq:equationimp}$.\\

So, if $(x_{b},x_{b+1})=(V_{\alpha},U_{\beta})$, then $x^{i^*}\equiv x^fU_{\beta}V_{\alpha}x^e(Fil^{t+1})$, $t=r^{'}+s^{'}+r+s$. \\

Consider relation $\ref{eq:sixIwahori}$. It is 

\begin{center}
	$(1+V_{\alpha})(1+U_{\beta})=(1+U_{(i,k)})(1+U_{\beta})(1+V_{\alpha}),i>k,(\alpha \in \Phi^+,\beta \in \Phi^-,(i,k)=\alpha+\beta)$
\end{center} 
where $\alpha=(i,j),\beta=(j,k),j>i>k$. \\

Now, deg$(V_{(i,j)})=j-i$,  deg$(U_{(j,k)})=n-j+k$, deg$(V_{\alpha}U_{\beta})=n+k-i=$ deg$(U_{(i,k)})$. Therefore, deg$(U_{(i,k)}V_{\alpha})$ and deg$(U_{(i,k)}U_{\beta})$ are greater than deg$(V_{\alpha}U_{\beta})$. This gives 
\begin{center}
	$V_{\alpha}U_{\beta}=U_{(i,k)}+U_{\beta}V_{\alpha} (Fil^{r+s+1})$,
\end{center}
where $r=$deg$(V_{\alpha}),$ $s=$deg$(U_{\beta})$. If 
\begin{center}
	$(x_{b},x_{b+1})=(V_{\alpha},U_{\beta})$,
\end{center}
then after replacing $V_{\alpha}U_{\beta}$ by $U_{\beta}V_{\alpha}+U_{(i,k)}$, we have to show that we reduce  the number of inversions. For any set $S$, let $\vert S\vert$ denote its cardinality. In the following, the notation $\vert{\substack{\zeta < \mu,\\ i_\zeta>i\mu}}\vert$ will denote $\sum\limits_{\substack{\zeta < \mu\\ i_\zeta>i\mu}}1$. Similarly, $2\vert{\substack{\zeta < \mu,\\ i_\zeta>i\mu}}\vert:=\sum\limits_{\substack{\zeta < \mu\\ i_\zeta>i\mu}}2$. The number of inversions in $x^{i^*}$ was originally as follows, where $\zeta,\mu$ denotes indices $\neq$ $b,b+1$ in the product $x^{i^*}$:
\begin{center}
	$inv=\vert{\substack{\zeta < \mu,\\ i_\zeta>i\mu}}\vert+\vert{\substack{\mu>b+1,\\
			i_{\mu}\in [index(U_{\beta}),index(V_{\alpha})[}}\vert+2\vert{\substack{\mu>b+1,\\
			i_{\mu}<index(U_{\beta})}}\vert+\vert{\substack{\zeta <b,\\
			i_{\zeta}\in ]index(U_{\beta}),index(V_{\alpha})]}}\vert+2\vert{\substack{\zeta <b,\\
			i_{\zeta}>index(V_{\alpha})}}\vert \hspace{.5cm}+ 1$.
\end{center}
Here, the cardinality symbols have natural meanings as explained above in the case for $\vert{\substack{\zeta < \mu,\\ i_\zeta>i\mu}}\vert$. For example, the term $2\vert{\substack{\mu>b+1,\\
		i_{\mu}<index(U_{\beta})}}\vert$ is by definition $\sum\limits_{\substack{\mu>b+1,\\
		i_{\mu}<index(U_{\beta})}}2$. The notation $]index(U_{\beta}),index(V_{\alpha})]$ denotes the half open interval and 
by $index(U_{-})$ we mean that if $U_{-}$  corresponds to $x_{c}$ for some $c \in 1,...,d$ (variable in $\mathcal{A}$), then $index(U_{-})=c$. Thus, $index(U_{\beta})<index(V_{\alpha})$. Now, $V_{\alpha}U_{\beta} \rightarrow U_{\beta}V_{\alpha}$ clearly decreases the number of inversions and after changing $V_{\alpha}U_{\beta}$ into $U_{(i,k)}$,
we have 
\begin{center}
	$inv^{'}=\vert{\substack{\zeta < \mu,\\ i_\zeta>i\mu}}\vert+\vert{\substack{\mu >b+1,\\
			i_{\mu}<index(U_{(i,k)})}}\vert+\vert{\substack{\zeta <b,\\
			i_{\zeta}>index(U_{(i,k)})}}\vert$.
\end{center}
As $index(U_{(i,k)})<index(V_{\alpha})$, we have
\begin{align*}
\vert{\substack{\mu >b+1,\\
		i_{\mu}<index(U_{(i,k)})}}\vert\leq \vert{\substack{\mu >b+1,\\
		i_{\mu}<index(V_{\alpha})}}\vert=\vert{\substack{\mu>b+1,\\
		i_{\mu}\in [index(U_{\beta}),index(V_{\alpha})[}}\vert+\vert{\substack{\mu>b+1,\\
		i_{\mu}<index(U_{\beta})}}\vert \\
\leq \vert{\substack{\mu>b+1,\\
		i_{\mu}\in [index(U_{\beta}),index(V_{\alpha})[}}\vert+2\vert{\substack{\mu>b+1,\\
		i_{\mu}<index(U_{\beta})}}\vert.
\end{align*}

Also, $index(U_{\beta})=index(U_{(j,k)})<indexU_{(i,k)}$ because $j>i$ and our chosen order of the Lazard's basis for $G$. Therefore,
\begin{align*}
\vert{\substack{\zeta <b, \\
		i_{\zeta}>index(U_{(i,k)})}}\vert\leq \vert{\substack{\zeta <b, \\
		i_{\zeta}>index(U_{\beta})}}\vert=\vert{\substack{\zeta <b,\\
		i_{\zeta}\in ]index(U_{\beta}),index(V_{\alpha})]}}\vert+\vert{\substack{\zeta <b,\\
		i_{\zeta}>index(V_{\alpha})}}\vert \\
\leq \vert{\substack{\zeta <b,\\
		i_{\zeta}\in ]index(U_{\beta}),index(V_{\alpha})]}}\vert+2\vert{\substack{\zeta <b,\\
		i_{\zeta}>index(V_{\alpha})}}\vert.
\end{align*}

Hence, $inv^{'}<inv$. \\

\hrule
\vspace{.2cm}
Now, we consider the relation $\ref{eq:sevenIwahori}$. With similar argument as in the case while dealing with the relation $\ref{eq:fiveiwahori}$, using the condition $p>n+1$, it will reduce to 
\begin{center}
	$U_{\beta}V_{\alpha}=V_{\alpha}U_{\beta}(Fil^{r+s+1}),(\alpha \in \Phi^+,\beta \in \Phi^-)$,
\end{center}
where $r=$ deg$(U_{\beta})$ and $s=$ deg$(V_{\alpha})$. So, $V_{\alpha}U_{\beta}\rightarrow U_{\beta}V_{\alpha}$ will obviously reduce the number of inversions. \\
\hrule
\vspace{.2cm}
Let us consider relation $\ref{eightIwahori}$ which is

\begin{center}
	$(1+U_{\beta})(1+V_{\alpha})=(1+U_{(k,j)})(1+V_{\alpha})(1+U_{\beta}),k>j,(k,j)=\beta+\alpha,$
\end{center}
where $\alpha=(i,j),\beta=(k,i),i<j<k.$ Expanding, we obtain 
\begin{center}
	$V_{\alpha}U_{\beta}=U_{\beta}V_{\alpha}-U_{(k,j)}-U_{(k,j)}V_{\alpha}-U_{(k,j)}U_{\beta}(Fil^{r+s+1})$,
\end{center}
where $r=$ deg$(U_{\beta})$ and $s=$ deg$(V_{\alpha})$. Now, deg$(V_{\alpha}U_{\beta})=j-i+n-k+i=n+j-k=$ deg$(U_{(k,j)})$. So, deg$(U_{(k,j)}V_{\alpha})$ and deg$(U_{(k,j)}U_{\beta})$ are greater than deg$(V_{\alpha}U_{\beta})$. Thus,
\begin{center}
	$V_{\alpha}U_{\beta}=-U_{(k,j)}+U_{\beta}V_{\alpha}(Fil^{r+s+1})$.
\end{center}
Let $(x_{b},x_{b+1})=(V_{\alpha},U_{\beta})$. The number of inversions in $x^{i^*}$ was originally as follows, where $\zeta,\mu$ denotes indices $\neq$ $b,b+1$ in the product $x^{i^*}$:

\begin{center}
	$inv=\vert{\substack{\zeta < \mu,\\ i_\zeta>i\mu}}\vert+\vert{\substack{\mu>b+1,\\
			i_{\mu}\in [index(U_{\beta}),index(V_{\alpha})[}}\vert+2\vert{\substack{\mu>b+1,\\
			i_{\mu}<index(U_{\beta})}}\vert+\vert{\substack{\zeta <b,\\
			i_{\zeta}\in ]index(U_{\beta}),index(V_{\alpha})]}}\vert+2\vert{\substack{\zeta <b,\\
			i_{\zeta}>index(V_{\alpha})}}\vert \hspace{.5cm}+ 1$.
\end{center}
As $index(U_{\beta})<index(V_{\alpha})$, the transition $V_{\alpha}U_{\beta} \rightarrow U_{\beta}V_{\alpha}$ clearly reduces the number of inversions and by changing $V_{\alpha}U_{\beta}$ into $-U_{(k,j)}$ we get 
\begin{center}
	$inv^{'}=\vert{\substack{\zeta < \mu,\\ i_\zeta>i\mu}}\vert+\vert{\substack{\mu >b+1,\\
			i_{\mu}<index(U_{(k,j)})}}\vert+\vert{\substack{\zeta <b, \\
			i_{\zeta}>index(U_{(k,j)})}}\vert$.
\end{center}
Now, as $index(U_{(k,j)})<index(V_{\alpha})$ we have 
\begin{align*}
\vert{\substack{\mu >b+1,\\
		i_{\mu}<index(U_{(k,j)})}}\vert\leq \vert{\substack{\mu >b+1,\\
		i_{\mu}<index(V_{\alpha})})}\vert=\vert{\substack{\mu>b+1,\\
		i_{\mu}\in [index(U_{\beta}),index(V_{\alpha})[}}\vert+\vert{\substack{\mu>b+1,\\
		i_{\mu}<index(U_{\beta})}}\vert \\
\leq \vert{\substack{\mu>b+1,\\
		i_{\mu}\in [index(U_{\beta}),index(V_{\alpha})[}}\vert+2\vert{\substack{\mu>b+1,\\
		i_{\mu}<index(U_{\beta})}}\vert.
\end{align*}
Also,  $index(U_{\beta})=index(U_{(k,i)})<indexU_{(k,j)}$ because $i<j$ and our chosen order of the Lazard's basis for $G$. Therefore,
\begin{align*}
\vert{\substack{\zeta <b, \\
		i_{\zeta}>index(U_{(k,j)})}}\vert\leq \vert{\substack{\zeta <b, \\
		i_{\zeta}>index(U_{\beta})}}\vert=\vert{\substack{\zeta <b,\\
		i_{\zeta}\in ]index(U_{\beta}),index(V_{\alpha})]}}\vert+\vert{\substack{\zeta <b,\\
		i_{\zeta}>index(V_{\alpha})}}\vert \\
\leq \vert{\substack{\zeta <b,\\
		i_{\zeta}\in ]index(U_{\beta}),index(V_{\alpha})]}}\vert+2\vert{\substack{\zeta <b,\\
		i_{\zeta}>index(V_{\alpha})}}\vert.
\end{align*}

Hence, $inv^{'}<inv$.\\
\hrule
\vspace{.2cm}
Consider relation $\ref{eq:fourIwahori}$. It is 

\begin{center}
	$(1+V_{\alpha})(1+U_{-\alpha})=(1+U_{-\alpha})^{(1+p)^{-1}}(1+W_{(i,i+1)})\cdots (1+W_{(j-1,j)})(1+V_{\alpha})^{(1+p)^{-1}},(\alpha \in \Phi^+),$
\end{center}
where $\alpha =(i,j)[i<j]$. Let $q=(1+p)^{-1}$. Expanding $(1+U_{-\alpha})^q$, we get that 
\begin{center}
	$(1+U_{-\alpha})^q=1+qU_{-\alpha}+\frac{q(q-1)}{2}U_{-\alpha}^2\cdots $
\end{center}
If $r=$deg$(U_{-\alpha})$, then for any positive integer $m \geq 2$ we have $rm \leq n \implies {q\choose m}\equiv 0$ (mod $p$) because $m \leq rm \leq n <p-1$  trivially implies ${q\choose m} \equiv 0$ (mod $p$) for $m \geq 2$.\\

We have $n=$ deg$(V_{\alpha}U_{-\alpha})$. Expanding the relation above and looking modulo $Fil^{n+1}$ we deduce 
\begin{center}
	$V_{\alpha}U_{-\alpha}=W_{(i,i+1)}+\cdots +W_{(j-1,j)}+U_{-\alpha}V_{\alpha}(Fil^{n+1})$,
\end{center}
since all other product terms will be of the form $W_{-}D$ for some nontrivial variable $D$ and hence have degree strictly greater than $n=$deg$(W_{-})$.\\

Let $(x_{b},x_{b+1})=(V_{\alpha},U_{-\alpha})$, then if we replace $V_{\alpha}U_{-\alpha}$ by $U_{-\alpha}V_{\alpha}$ then we reduce the inversions. If we replace $V_{\alpha}U_{-\alpha}$ by $W_{(k,k+1)}$ then we  show that we reduce the number of inversions. \\
The number of inversions $inv$ in $x^{i^*}$ in the beginning was
\begin{center}
	$\vert{\substack{\zeta < \mu,\\ i_\zeta>i\mu}}\vert+\vert{\substack{\mu>b+1,\\
			i_{\mu}\in [index(U_{-\alpha}),index(V_{\alpha})[}}\vert+2\vert{\substack{\mu>b+1,\\
			i_{\mu}<index(U_{-\alpha})}}\vert+\vert{\substack{\zeta <b,\\
			i_{\zeta}\in ]index(U_{-\alpha}),index(V_{\alpha})]}}\vert+2\vert{\substack{\zeta <b,\\
			i_{\zeta}>index(V_{\alpha})}}\vert \hspace{.5cm}+ 1$.
\end{center}
After changing $V_{\alpha}U_{-\alpha}$ into $W_{k,k+1}$, for some $k\in [i,j-1]$, we count the number of inversions:
\begin{center}
	$inv^{'}=\vert{\substack{\zeta < \mu,\\ i_\zeta>i\mu}}\vert+\vert{\substack{\mu >b+1,\\
			i_{\mu}<index(W_{k,k+1})=\frac{n(n-1)}{2}+k}}\vert+\vert{\substack{\zeta <b, \\
			i_{\zeta}>index(W_{k,k+1})=\frac{n(n-1)}{2}+k}}\vert$.
\end{center}
As $index(W_{k,k+1})<index(V_{\alpha})$, we have
\begin{align*}
\vert{\substack{\mu >b+1,\\
		i_{\mu}<index(W_{k,k+1})}}\vert \leq \vert{\substack{\mu >b+1,\\
		i_{\mu}<index(V_{\alpha})}}\vert=\vert{\substack{\mu>b+1,\\
		i_{\mu}\in [index(U_{-\alpha}),index(V_{\alpha})[}}\vert+\vert{\substack{\mu>b+1,\\
		i_{\mu}<index(U_{-\alpha})}}\vert \\ 
\leq \vert{\substack{\mu>b+1,\\
		i_{\mu}\in [index(U_{-\alpha}),index(V_{\alpha})[}}\vert+2\vert{\substack{\mu>b+1,\\
		i_{\mu}<index(U_{-\alpha})}}\vert.
\end{align*}
Also, as $index(U_{-\alpha})<index(W_{k,k+1})$, we have
\begin{align*}
\vert{\substack{\zeta <b, \\
		i_{\zeta}>index(W_{k,k+1})}}\vert \leq \vert{\substack{\zeta <b, \\
		i_{\zeta}>index(U_{-\alpha})}}\vert=\vert{\substack{\zeta <b,\\
		i_{\zeta}\in ]index(U_{-\alpha}),index(V_{\alpha})]}}\vert+\vert{\substack{\zeta <b,\\
		i_{\zeta}>index(V_{\alpha})}}\vert \\
\leq \vert{\substack{\zeta <b,\\
		i_{\zeta}\in ]index(U_{-\alpha}),index(V_{\alpha})]}}\vert+2\vert{\substack{\zeta <b,\\
		i_{\zeta}>index(V_{\alpha})}}\vert.
\end{align*}

Hence, we obtain $inv^{'}<inv$. \\
\hrule
\vspace{.2cm}
Consider relation $\ref{eq:nineIwahori}$ which is
\begin{center}
	$U_{\beta_1}U_{\beta_2}=U_{\beta_2}U_{\beta_1},(\beta_1,\beta_2 \in \Phi^-).$
\end{center}
So $U_{\beta_1},U_{\beta_2}$ commute and we can reduce the number of inversions. Similarly, the relations $\ref{eq:tenIwahori}$ and $\ref{eq:elevenIwahori}$ will reduce to $U_{\beta_1}U_{\beta_2}=U_{\beta_2}U_{\beta_1}(Fil^{r+s+1})$ where $r=$ deg$(U_{\beta_1})$ and $s=$ deg$(U_{\beta_2})$. (For this use the Lazard condition $p>n+1$ and the computation on degrees as we have already done in $\ref{eq:equationimp}$; example: for $\ref{eq:tenIwahori}$ we have for all natural number $m \geq 2 $, $m-1 \leq $ deg$(U_{i,k})(m-1) \leq n \implies {p\choose m}\equiv 0$ (mod $p$)). So, if we start with the wrong order, that is, suppose $index(U_{\beta_1})>index(U_{\beta_2})$, then $U_{\beta_1}U_{\beta_2}\rightarrow U_{\beta_2}U_{\beta_1}$ reduces the number of inversions. \\
\hrule
\vspace{.2cm}
Relation $\ref{eq:twelveIwahori}$ is 
\begin{center}
	$W_{\delta_1}W_{\delta_2}=W_{\delta_2}W_{\delta_1}(Fil^{2n+1})$,($\delta_1,\delta_2 \in \Pi$). 
\end{center}
So, if we start with the wrong order, that is, suppose $index(W_{\delta_1})>index(W_{\delta_2})$, then $W_{\delta_1}W_{\delta_2}\rightarrow W_{\delta_2}W_{\delta_1}$ reduces the number of inversions. Relation $\ref{eq:thirteenIwahori}$  is similar and so we omit it. We need to struggle with relation $\ref{eq:fourteenIwahori}$ and $\ref{eq:fifteenIwahori}$. \\
\hrule
\vspace{.2cm}
First we consider relation $\ref{eq:fourteenIwahori}$. We have 

\begin{center}
	$(1+V_{\alpha_1})(1+V_{\alpha_2})=(1+V_{(i,k)})(1+V_{\alpha_2})(1+V_{\alpha_1}),(\alpha_1,\alpha_2 \in \Phi^+,\alpha_1+\alpha_2=(i,k))$,
\end{center}
where $\alpha_1=(i,j),\alpha_2=(j,k),i<j<k.$ So we have 
\begin{center}
	$V_{\alpha_1}V_{\alpha_2}=V_{(i,k)}+V_{(i,k)}V_{\alpha_1}+V_{\alpha_2}V_{\alpha_1}(Fil^{r+s+1})$,
\end{center}
where $r:=j-i=$ deg$(V_{\alpha_1})$ and $s:=k-j=$ deg$(V_{\alpha_2})$.  So, the degree of $V_{\alpha_1}V_{\alpha_2}$ is $k-i$ which is the same as the degree of $V_{(i,k)}$. Therefore, we have
\begin{center}
	$V_{\alpha_1}V_{\alpha_2}=V_{(i,k)}+V_{\alpha_2}V_{\alpha_1}(Fil^{r+s+1})$.
\end{center} 
We note that $V_{\alpha_1}=V_{(i,j)}$ and $V_{\alpha_2}=V_{(j,k)}$ and $i<j<k$. So, the wrong order is $V_{\alpha_1}V_{\alpha_2}$ and not $V_{\alpha_2}V_{\alpha_1}$, i.e. $index(V_{\alpha_1})>index(V_{\alpha_2})$. If $(x_{b},x_{b+1})=(V_{\alpha_1},V_{\alpha_2})$, the number of inversions in $x^{i^*}$ was originally as follows, where $\zeta,\mu$ denotes indices $\neq$ $b,b+1$ in the product $x^{i^*}$:

\begin{center}
	$inv=\vert{\substack{\zeta < \mu,\\ i_\zeta>i\mu}}\vert+\vert{\substack{\mu>b+1,\\
			i_{\mu}\in [index(V_{\alpha_2}),index(V_{\alpha_1})[}}\vert+2\vert{\substack{\mu>b+1,\\
			i_{\mu}<index(V_{\alpha_2})}}\vert+\vert{\substack{\zeta <b,\\
			i_{\zeta}\in ]index(V_{\alpha_2}),index(V_{\alpha_1})]}}\vert+2\vert{\substack{\zeta <b,\\
			i_{\zeta}>index(V_{\alpha_1})}}\vert \hspace{.5cm}+ 1$.
\end{center}
The map $ V_{\alpha_1}V_{\alpha_2} \rightarrow  V_{\alpha_2}V_{\alpha_1}$ obviously reduces the number of inversions and by changing \\
$(x_{b},x_{b+1})\rightarrow V_{(i,k)}$ we have 

\begin{center}
	$inv^{'}=\vert{\substack{\zeta < \mu,\\ i_\zeta>i\mu}}\vert+\vert{\substack{\mu >b+1,\\
			i_{\mu}<index(V_{(i,k)})}}\vert+\vert{\substack{\zeta <b, \\
			i_{\zeta}>index(V_{(i,k)})}}\vert$.
\end{center}
Now, $index (V_{(i,k)})<index(V_{(i,j)})=index(V_{\alpha_1})$  and $index(V_{\alpha_2})=index(V_{(j,k)})<index(V_{(i,k)})$ as $k>j>i$,
because of our lexicographic choice of the ordering of the  Lazard's basis for $N^+$. So we have 
\begin{align*}
\vert{\substack{\mu >b+1,\\
		i_{\mu}<index(V_{(i,k)})}}\vert\leq \vert{\substack{\mu >b+1,\\
		i_{\mu}<index(V_{\alpha_1})}}\vert=\vert{\substack{\mu>b+1,\\
		i_{\mu}\in [index(V_{\alpha_2}),index(V_{\alpha_1})[}}\vert+\vert{\substack{\mu>b+1,\\
		i_{\mu}<index(V_{\alpha_2})}}\vert \\
\leq \vert{\substack{\mu>b+1,\\
		i_{\mu}\in [index(V_{\alpha_2}),index(V_{\alpha_1})[}}\vert+2\vert{\substack{\mu>b+1,\\
		i_{\mu}<index(V_{\alpha_2})}}\vert,
\end{align*}
and 
\begin{align*}
\vert{\substack{\zeta <b, \\
		i_{\zeta}>index(V_{(i,k)})}}\vert \leq \vert{\substack{\zeta <b, \\
		i_{\zeta}>index(V_{\alpha_2})}}\vert=\vert{\substack{\zeta <b,\\
		i_{\zeta}\in ]index(V_{\alpha_2}),index(V_{\alpha_1})]}}\vert+\vert{\substack{\zeta <b,\\
		i_{\zeta}>index(V_{\alpha_1})}}\vert \\
\leq \vert{\substack{\zeta <b,\\
		i_{\zeta}\in ]index(V_{\alpha_2}),index(V_{\alpha_1})]}}\vert+2\vert{\substack{\zeta <b,\\
		i_{\zeta}>index(V_{\alpha_1})}}\vert.
\end{align*}

Hence, we obtain $inv^{'}<inv$. \\
\hrule
\vspace{.2cm}
Consider  relation $\ref{eq:fifteenIwahori}$. It is
\begin{center}
	$(1+V_{\alpha_2})(1+V_{\alpha_1})=(1+V_{(k,j)})(1+V_{\alpha_1})(1+V_{\alpha_2}),(\alpha_1,\alpha_2 \in \Phi^+,\alpha_1+\alpha_2=(k,j))$,
\end{center}
where $\alpha_2=(k,i),\alpha_1=(i,j),k<i<j$. Like the previous relation it is evident that the wrong order is $V_{\alpha_2}V_{\alpha_1}$ i.e. $index(V_{\alpha_2})>index(V_{\alpha_1})$ and we have
\begin{center}
	$V_{\alpha_2}V_{\alpha_1}=V_{(k,j)}+V_{\alpha_1}V_{\alpha_2}(Fil^{r+s+1})$,
\end{center}
where $r=$ deg$(V_{\alpha_2})$ and $s=$ deg$(V_{\alpha_1})$. Let $(x_{b},x_{b+1})=(V_{\alpha_2},V_{\alpha_1})$. We count the number of inversions like we did in our previous relation.

\begin{center}
	$inv=\vert{\substack{\zeta < \mu,\\ i_\zeta>i\mu}}\vert+\vert{\substack{\mu>b+1,\\
			i_{\mu}\in [index(V_{\alpha_1}),index(V_{\alpha_2})[}}\vert+2\vert{\substack{\mu>b+1,\\
			i_{\mu}<index(V_{\alpha_1})}}\vert+\vert{\substack{\zeta <b,\\
			i_{\zeta}\in ]index(V_{\alpha_1}),index(V_{\alpha_2})]}}\vert+2\vert{\substack{\zeta <b,\\
			i_{\zeta}>index(V_{\alpha_2})}}\vert \hspace{.5cm}+ 1$.
\end{center}

The map $V_{\alpha_2}V_{\alpha_1} \rightarrow V_{\alpha_1}V_{\alpha_2}$ reduces the number of inversions as $index(V_{\alpha_2})>index(V_{\alpha_1})$ and after changing $(x_{b},x_{b+1})\rightarrow V_{(k,j)}$, we have 

\begin{center}
	$inv^{'}=\vert{\substack{\zeta < \mu,\\ i_\zeta>i\mu}}\vert+\vert{\substack{\mu >b+1,\\
			i_{\mu}<index(V_{(k,j)})}}\vert+\vert{\substack{\zeta <b, \\
			i_{\zeta}>index(V_{(k,j)})}}\vert$.
\end{center}
Here,  $index(V_{(k,j)})<index(V_{(k,i)})=index(V_{\alpha_2})$ and $index(V_{\alpha_1})=index(V_{(i,j)})<index(V_{(k,j)})$ because $k<i<j$ and because of our lexicographic choice of the ordering of the Lazard's basis of $N^+$. Therefore,
\begin{align*}
\vert{\substack{\mu >b+1,\\
		i_{\mu}<index(V_{(k,j)})}}\vert\leq \vert{\substack{\mu >b+1,\\
		i_{\mu}<index(V_{\alpha_2})}}\vert=\vert{\substack{\mu>b+1,\\
		i_{\mu}\in [index(V_{\alpha_1}),index(V_{\alpha_2})[}}\vert+\vert{\substack{\mu>b+1,\\
		i_{\mu}<index(V_{\alpha_1})}}\vert \\
\leq \vert{\substack{\mu>b+1,\\
		i_{\mu}\in [index(V_{\alpha_1}),index(V_{\alpha_2})[}}\vert+2\vert{\substack{\mu>b+1,\\
		i_{\mu}<index(V_{\alpha_1})}}\vert,
\end{align*}
and 
\begin{align*}
\vert{\substack{\zeta <b, \\
		i_{\zeta}>index(V_{(k,j)})}}\vert \leq \vert{\substack{\zeta <b, \\
		i_{\zeta}>index(V_{\alpha_1})}}\vert=\vert{\substack{\zeta <b,\\
		i_{\zeta}\in ]index(V_{\alpha_1}),index(V_{\alpha_2})]}}\vert+\sum\limits_\vert{\substack{\zeta <b,\\
		i_{\zeta}>index(V_{\alpha_2})}}\vert \\
\leq \vert{\substack{\zeta <b,\\
		i_{\zeta}\in ]index(V_{\alpha_1}),index(V_{\alpha_2})]}}\vert+2\vert{\substack{\zeta <b,\\
		i_{\zeta}>index(V_{\alpha_2})}}\vert.
\end{align*}

Hence, we deduce $inv^{'}<inv$.  So we have completed the proof of our Proposition $\ref{eq:longlemma}$.
\vspace{.2cm}
\hrule
\hrule
\vspace{.1cm}
\hrule
\hrule
\hrule
\vspace{.2cm}

\thispagestyle{empty}
\strut\newpage
\bibliographystyle{alpha}
\bibliography{base_change_Iwahori,p_rational_field,Iwawasa_algebra_and_cornut}

\end{document}